\theoremstyle{plain}
\newtheorem{thm}{Theorem}[section]
\newtheorem*{unnum}{Theorem}
\newtheorem*{unnumcor}{Corollary}
\newtheorem{cor}[thm]{Corollary}
\newtheorem{lem}[thm]{Lemma}
\newtheorem{prop}[thm]{Proposition}
\theoremstyle{definition}
\newtheorem{defn}[thm]{Definition}
\newtheorem{example}[thm]{Example}
\theoremstyle{remark}
\newtheorem{rem}[thm]{Remark}
\title{Obtainable Sizes of Topologies on Finite Sets}
\author{K\'{a}ri Ragnarsson}
\address{Mathematics Institute, Reykjav\'{i}k University, Kringlunni 1, 103 Reykjav\'{i}k, Iceland}
\email{kari.ragnarsson@ru.is}
\author{Bridget Eileen Tenner}
\address{Department of Mathematical Sciences, DePaul University, 2320 North Kenmore Avenue, Chicago, IL 60614, USA}
\email{bridget@math.depaul.edu}
\subjclass[2000]{Primary 06A07; Secondary 54A99, 05A99}
\begin{document}

\begin{abstract}
We study the smallest possible number of points in a topological space having $k$ open sets. Equivalently, this is the smallest possible number of elements in a poset having $k$ order ideals.  Using efficient algorithms for constructing a topology with a prescribed size, we show that this number has a logarithmic upper bound.  We deduce that there exists a topology on $n$ points having $k$ open sets, for all $k$ in an interval which is exponentially large in $n$.  The construction algorithms can be modified to produce topologies where the smallest neighborhood of each point has a minimal size, and we give a range of obtainable sizes for such topologies.
\end{abstract}

\maketitle

\section{Introduction}

Finite topological spaces present many interesting combinatorial questions.  The most fundamental of these concerns the number $T(n)$ of different topologies on $n$ points.  This number has been determined by exhaustive enumeration for $n \leq 16$ (\cite{BrinkmannMcKay}).  The general question is very difficult, and it is uncertain whether a formula for $T(n)$ will ever be obtained, although asymptotic estimates exist.  Ern\'e showed in \cite{erne0} and \cite{erne1} that $T(n)$ is asymptotically equal to $T_0(n)$, the number of $T_0$-topologies (or, equivalently, partial orders) on $n$ points, which together with the asymptotic bounds for the latter, due to Kleitman and Rothschild in \cite{kleitman1} and \cite{kleitman2} provide asymptotic bounds for $T(n)$.  Moreover, Ern\'e gave the asymptotic estimate $2^{n/2 + O(\log_2n)}$ for the average cardinality of topologies on $n$ points in \cite{erne2}.

The enumeration of topologies on $n$ points can be refined by counting $T(n,k)$, the number of topologies on $n$ points having $k$ open sets.  Just as for $T(n)$, this is a long-standing open problem, although some special cases are known.  The most important contributions are due to Ern\'e and Stege, who in \cite{ernestege-tech} computed the values of $T(n,k)$, for $n \le 11$ and arbitrary $k$, as well as the related numbers of $T_0$ and connected topologies, and the corresponding numbers of homeomorphism classes. Their results in particular yield all numbers $T(n,k)$ for $k \le 12$, which were later calculated independently by Benoumhani \cite{benoumhani}.  Moreover, Ern\'e and Stege computed the numbers $T(n,k)$ for $k \le 23$ in \cite{ernestege2}.

When $k$ is large in relation to $n$, then certainly $T(n,k) = 0$ for $k>2^n$.  In fact $T(n,k) = 0$ for many large values of $k \le 2^n$.  As a first step in this direction, Sharp \cite{sharp} and Stephen \cite{stephen} showed that $T(n,k) = 0$ when $3 \cdot 2^{n-2} < k < 2^n$.  Stanley \cite{stanley} computed $T(n,k)$ for $k \ge 7 \cdot 2^{n-4}$, and Kolli \cite{kolli} did likewise for $k \ge 3 \cdot 2^{n-3}$.  Additional cardinalities for large $k$ were computed by Parchmann (\cite{parchmann1} and \cite{parchmann2}), and Vollert characterized when $T(n,k) > 0$ for $k \in [2^{n-2},2^n]$ (see \cite{vollert}).

For a given $n$, it is then natural to ask: what is the smallest value of $k$ so that $T(n,k) = 0$?

\begin{defn}
For an integer $n \ge 1$, let $f(n) \ge 2$ be the smallest integer so that there exists no topology on $n$ points having $f(n)$ open sets.
\end{defn}

Equivalently, $f(n)$ is the largest number  so that there exists a topology on $n$ points with $k$ open sets for all $2 \leq k < f(n)$.  It was known as early as the 1970s that $f(n) < 2^{n-2}$ for $n > 8$ (see Parchmann \cite{parchmann1} and \cite{parchmann2}).  Particular examples supporting this result are given below.  These examples already occur in Vollert's thesis \cite{vollert}, and much more comprehensive material can be found in the papers by Ern\'e and Stege (see \cite{ernestege1, ernestege-tech, ernestege2}).  For example, \cite{ernestege-tech} yields $f(n)$ for all $n \le 11$.  From the asymptotic bounds established by Erd\"os and Ern\'e for clique numbers of graphs (see \cite{erdos}), in particular, for antichain numbers of posets, it follows that the quotient $f(n)/2^n$ tends to $0$ when $n$ becomes large.  On the other hand, in \cite{vollert}, Vollert derived the lower bound $f(n) \ge 2^{n/2+1}$ using arguments similar to those in Corollaries \ref{cor:x2+1} and \ref{cor:singles} below.

\begin{example}
There is no topology on $9$ points having $127$ open sets. That is, $T(9,127) = 0$.
\end{example}

\begin{example}
There is no topology on $10$ points having $191$ open sets.  That is, $T(10,191) = 0$.
\end{example}

We reproduced these results by letting Stembridge's MAPLE package \cite{posets} count the order ideals in all isomorphism classes of posets with at most $10$ elements. The relationship between posets and topologies is discussed in Section \ref{sec:Machinery}.

In this paper we obtain exponential lower bounds for $f(n)$, and thus a large interval of integers $k$ for which $T(n,k) > 0$. To this end we introduce and examine the following sequence.

\begin{defn}
For an integer $k \ge 2$, let $m(k)$ be the smallest positive integer such that there exists a topology on $m(k)$ points having $k$ open sets. 
\end{defn}

The above examples can be reformulated as: $m(127) > 9$ and $m(191) > 10$.

In Section \ref{sec:bounds} we obtain logarithmic upper bounds for $m(k)$, the main result being the following.

\begin{unnum}
For all $k \ge 2$,
\begin{equation*}
m(k) \le (4/3) \lfloor \log_2 k \rfloor + 2.
\end{equation*}
\end{unnum}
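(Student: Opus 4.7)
The plan is to prove the bound by strong induction on $k$. Small base cases ($k\le 8$) are handled by exhibiting explicit small posets: $m(2)=1$ (a single point), $m(3)=m(4)=2$ (chain, antichain), $m(5)=m(6)=3$, $m(7)=4$ via the four-element poset $\{a,b,c,d\}$ with relations $a<b$, $a<c$, $c<d$ (which has $7$ ideals), and $m(8)=3$ via the antichain on three points.

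For the inductive step I would use three basic operations. \emph{Disjoint union} gives $m(ab)\le m(a)+m(b)$, since $J(P\sqcup Q)=J(P)J(Q)$. \emph{Adjoining a global maximum} gives $m(k+1)\le m(k)+1$. And a family of \emph{partial-top gadgets}: starting from a poset $P$ with $J(P)=k$, disjoint-union with a three-element antichain $A_3$ (producing $8k$ ideals on $|P|+3$ points) and then adjoin a new element $t$ above all of $P$ together with a chosen subset $S\subseteq A_3$ of size $s\in\{0,1,2,3\}$; counting the ideals containing or missing $t$ gives $J=8k+2^{3-s}$ ideals on $|P|+4$ points. These gadgets yield $m(8k+r)\le m(k)+4$ for $r\in\{1,2,4\}$, and $m(8k)\le m(k)+3$ comes from the antichain alone. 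Since $\lfloor\log_2 k\rfloor=\lfloor\log_2 j\rfloor+3$ when $k=8j+r$ with $j\ge 1$, the target bound grows by exactly $4$ per three new bits of $k$, so the induction closes whenever $k\bmod 8\in\{0,1,2,4\}$. This is also why the coefficient $4/3$ appears: four extra points pay for three new bits.

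The main obstacle is the ``bad'' residues $r\in\{3,5,6,7\}\pmod 8$. No single four-point partial-top gadget realises $8k+r$ in these cases; the natural five-point substitute (e.g.\ $P\sqcup A_3$ with two parallel global tops above everything, giving $8k+3$ on $|P|+5$ points) overshoots the target by one. For these residues I would appeal to alternative decompositions of $k$: rewriting $k=4\ell+3$ and using the four-point gadget ``$A_2$ plus two parallel tops above $P\cup A_2$'' (which gives $m(4\ell+3)\le m(\ell)+4$); combining doubling steps $m(2k)\le m(k)+1$ with the global-maximum operation; or exploiting a multiplicative factorisation $k=ab$ with $a,b\ge 2$ when one exists. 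The bookkeeping required---verifying that in every residue class and every range of $\lfloor\log_2 k\rfloor$ at least one such route fits within the bound, using the slack that accumulates whenever $\lfloor\log_2 k\rfloor$ is not a multiple of three---is the most delicate part of the argument.
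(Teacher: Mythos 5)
Your overall strategy---pay four new points for three new bits of the binary expansion, so that the count comes to $(4/3)\lfloor\log_2 k\rfloor+O(1)$---is exactly the idea behind the paper's proof of Theorem~\ref{thm:triples}, and your base cases and your gadget for the residues you call good are fine. But the part you defer as ``delicate bookkeeping'' is precisely the mathematical heart of the theorem, and as outlined it does not close. Your partial-top gadget adjoins $t$ above $P$ and a subset $S$ of the new antichain $A_3$; the ideals containing $t$ are then in bijection with subsets of $A_3\setminus S$, so you can only add $2^{3-s}\in\{1,2,4,8\}$. Increments of $3,5,6,7$ are unreachable this way, because the portion of the poset not forced below $t$ is a sub-antichain and hence has a power of $2$ many ideals. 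Your fallback routes all run at a strictly worse rate: the $m(4\ell+3)\le m(\ell)+4$ gadget spends $4$ points on $2$ bits, and combining it (or $+1$/doubling steps) to clear a bad residue modulo $8$ costs at least $5$ points per $3$ bits. That is a deficit of at least one point per bad group, while the slack in the target $(4/3)\lfloor\log_2 k\rfloor+2$ is bounded by a constant; so for $k$ whose binary expansion is, say, $\textsf{1}$ followed by $t$ copies of $\textsf{011}$ (bad residue at every stage, and with no usable factorization guaranteed), the induction fails for large $t$. A further structural problem is the black-box strong induction on $m(k)$ itself: a minimal poset realizing $j$ ideals need not carry any structure you can exploit to build richer gadgets on top of it.

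The paper resolves both issues at once. First, it aligns the groups of three bits at the occurrences of $\textsf{1}$ (isolated $\textsf{0}$s are handled by a single disjoint point each), so the only increments ever needed are $r\in\{4,5,6,7\}$, never $1,2,3$. Second, instead of inducting on the value $m(k)$, it runs a construction that maintains an invariant: the poset is always of \emph{triple type} (Definition~\ref{def:tripletype}), i.e.\ it contains a dual order ideal isomorphic to one of three fixed small posets. Lemma~\ref{lem:tripletype} then shows that for a triple-type poset $P$ and any $r\in\{4,5,6,7\}$ one can add three disjoint points plus one maximal element $y$, placed above everything except a carefully chosen part of that dual order ideal together with some of the new points, so that the excluded part has exactly $r$ order ideals; this gives $j(P')=8j(P)+r$ with four new points while re-establishing triple type. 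This invariant is what lets the excluded part have $5$, $6$, or $7$ ideals rather than only powers of $2$, and it is the ingredient your proposal is missing; without it (or some substitute mechanism of comparable strength), the bad residues cannot be absorbed and the bound $(4/3)\lfloor\log_2 k\rfloor+2$ is not established.
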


The proof is constructive. That is, we provide an algorithm to construct a topology with $k$ open sets using no more than $(4/3) \lfloor \log_2 k \rfloor + 2$ points. As $f(n)$ is the smallest value of $k$ such that $m(k) > n$ (cf.~Remark~\ref{rem:explainT0}), the theorem yields the following bound for $f(n)$.

\begin{unnumcor} For all $n \ge 1$,
\begin{equation*}
f(n) > 2^{3(n-2)/4}.
\end{equation*}
\noindent That is, $T(n,k) > 0$ for all $k \in [2,2^{3(n-2)/4}]$.
\end{unnumcor}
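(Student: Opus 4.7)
The plan is to derive the corollary directly from the preceding theorem by a one-line numerical computation, together with the identification of $f(n)$ as the smallest $k$ with $m(k) > n$ recorded in Remark~\ref{rem:explainT0}.

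Concretely, fix an integer $k$ with $2 \le k \le 2^{3(n-2)/4}$. Then $\log_2 k \le 3(n-2)/4$, hence $\lfloor \log_2 k \rfloor \le 3(n-2)/4$, and the theorem yields $m(k) \le (4/3)\lfloor \log_2 k \rfloor + 2 \le n$. By the equivalence in Remark~\ref{rem:explainT0}---which rests on the fact that any finite topology on $m \le n$ points can be extended to one on exactly $n$ points without changing its lattice of open sets, for instance by repeatedly adjoining a point topologically indistinguishable from an existing one (equivalently, enlarging an equivalence class of the associated preorder from Section~\ref{sec:Machinery})---this gives $T(n,k) > 0$.

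Since this holds for every integer $k \in [2, 2^{3(n-2)/4}]$, the smallest integer $k$ for which $T(n,k) = 0$ must exceed $2^{3(n-2)/4}$, i.e., $f(n) > 2^{3(n-2)/4}$. I do not anticipate any genuine obstacle: the substantive work is encapsulated in the main theorem and the stated remark, and the corollary reduces to a direct bookkeeping step. The only detail worth verifying is that the inequality $2^{3(n-2)/4}$ really can be a real-valued bound---but since $f(n)$ is itself an integer, the strict inequality follows from having $T(n,k) > 0$ for all integers $k$ up to $\lfloor 2^{3(n-2)/4} \rfloor$, which is what the computation delivers.
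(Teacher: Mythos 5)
Your proposal is correct and follows essentially the same route as the paper: combine the bound $m(k) \le (4/3)\lfloor \log_2 k \rfloor + 2$ with the fact (Lemma on $m(k)$, part (b), as summarized in Remark~\ref{rem:explainT0}) that a size-$k$ topology on $m(k) \le n$ points yields one on exactly $n$ points by duplicating a point, giving $T(n,k) > 0$ for all integers $k \in [2, 2^{3(n-2)/4}]$ and hence $f(n) > 2^{3(n-2)/4}$. The paper treats this as an immediate consequence of Theorem~\ref{thm:triples} and that remark, so your write-up just makes the same bookkeeping explicit, including the harmless point that strictness follows since $f(n)$ is an integer exceeding $\lfloor 2^{3(n-2)/4} \rfloor$.
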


Thus this paper focuses on the values $\{m(k)\}$, and finding a close upper bound for the sequence.  The MAPLE program \cite{posets} can compute the initial values of this sequence, presented in Table~\ref{table:min n for k} for $k \in [2,35]$.  This is sequence A137813 of \cite{oeis}.

\begin{table}[htbp]
\begin{center}
$\begin{array}{c|cccccccccccccccccc}
k & 2 & 3 & 4 & 5 & 6 & 7 & 8 & 9 & 10 & 11 & 12 & 13 & 14 & 15 & 16 & 17 & 18 & 19\\
\hline
m(k) & 1 & 2 & 2 & 3 & 3 & 4 & 3 & 4 & 4 & 5 & 4 & 5 & 5 & 5 & 4 & 5 & 5 & 6\\
\end{array}$
\ \\
\vspace{.2in}
$\begin{array}{c|cccccccccccccccc}
k & 20 & 21 & 22 & 23 & 24 & 25 & 26 & 27 & 28 & 29 & 30 & 31 & 32 & 33 & 34 & 35\\
\hline
m(k) & 5 & 6 & 6 & 6 & 5 & 6 & 6 & 6 & 6 & 7 & 6 & 7 & 5 & 6 & 6 & 7
\end{array}$
\end{center}
\smallskip
\caption{The minimum number of points $m(k)$ needed to make a topology having $k$ open sets, as computed by \cite{posets}, for $k \in [2,35]$.}\label{table:min n for k}
\end{table}

The numerical tables computed by Ern\'e and Stege in \cite{ernestege-tech} give $m(k)$ at least for $k \le 379$, and $f(11) = 379$.

The same computation also gives us the values of $f(n)$ for $n \in [1,10]$. These values are displayed in Table~\ref{table:min bad k for n}, where they are also compared to the result of Theorem~\ref{thm:triples}. The table indicates, as expected, that the bound is not strict.  However, these data points do not contradict the possibility that $2^{3(n-2)/4}$ may give the correct growth rate for $f(n)$.

\begin{table}[htbp]
\begin{center}
$\begin{array}{c|cccccccccc}
\rule[-2mm]{0mm}{6mm} n & 1 & 2 & 3 & 4 & 5 & 6 & 7 & 8 & 9 & 10\\
\hline
\rule[-4mm]{0mm}{9mm} f(n) & 3 & 5 & 7 & 11 & 19 & 29 & 47 & 79 & 127 & 191\\
\hline
\rule[-2mm]{0mm}{7mm}\lfloor2^{3(n-2)/4}\rfloor + 1 & 1 & 2 & 2 & 3 & 5 & 9 & 14 & 23 & 39 & 65
\end{array}$
\smallskip
\end{center}
\caption{The values of $f(n)$ for $n \le 10$.  The bottom row is the size of the smallest topology not obtained by Theorem~\ref{thm:triples}.  That is, the bottom row is $1$ more than the bound $2^{3(n-2)/4}$ obtained in Theorem~\ref{thm:triples}, rounded down to the nearest integer.}\label{table:min bad k for n}
\end{table}

We conclude this introduction by outlining the organization of the paper. In Section \ref{sec:Machinery} we recall basic definitions and describe machinery we will use throughout the proofs. This includes the correspondence between topologies and posets, under which open sets correspond to order ideals. We also develop methods to compute the number of order ideals in a poset. In Section \ref{sec:bounds} we prove the main theorems, giving proofs of logarithmic upper bounds for $m(k)$, and consequently exponential lower bounds for $f(n)$. The proofs are constructive in that we explicitly show how to construct a topology on $n$ points having $k$ open sets for $k \in [2,2^{3(n-2)/4}]$. In Section \ref{sec:specified} we apply the constructions from Section \ref{sec:bounds} to the situation where the minimal neighborhood of each point must have at least $m$ points, and obtain a similar interval of obtainable topology sizes. In Section \ref{sec:better efficiency} we discuss instances where the constructions in Section \ref{sec:bounds} are more efficient, giving topologies on fewer points than the bounds suggest. Finally, in Section \ref{sec:comparison}, we make general observations about the sequences, $\{m(k)\}$ and $\{f(n)\}$, comparing them to other known sequences.

\section{Machinery}\label{sec:Machinery}

In this section we define and discuss some of the basic objects studied in this paper.  Many of these definitions and results are well known, but they are presented again here for the sake of completeness.  

We begin by recalling the definition of a topology.

\begin{defn}
A \emph{topology} on a set $X$ is a collection $\mathcal{T}$ of subsets of $X$, such that $\emptyset, X \in \mathcal{T}$, and $\mathcal{T}$ is closed under arbitrary union and finite intersection.  Elements in $\mathcal{T}$ are called \emph{open sets}. The \emph{size} of a topology is the number of open sets.  In other words, the size of the topology is the cardinality of $\mathcal{T}$.
\end{defn}

The following class of topologies is of special importance in this article.

\begin{defn}
A \emph{$T_0$ topology} on a set $X$ is a topology on $X$ such that, for any pair of distinct points in $X$, there exists an open set containing one of these points and not the other.  In other words, any two points in a $T_0$ topology can be distinguished topologically.
\end{defn}

In this paper we are only concerned with topologies on finite sets $X$. As $X$ has only finitely many subsets, a topology on $X$ is in fact closed under arbitrary intersection.  Consequently, for a point $x \in X$, we can form the minimal open set containing $x$ by taking the intersection  
\begin{equation*}
U_x = \bigcap_{\substack{ U \in \mathcal{T} \\x \in U }} U.
\end{equation*}
\noindent These minimal open sets determine $\mathcal{T}$, since
\begin{equation*}
U = \bigcup_{x \in U} U_x
\end{equation*}
\noindent for all $U \in \mathcal{T}$.

For distinct $x$ and $y$, minimality implies that the sets $U_x$ and $U_y$ are either disjoint, or one is contained in the other. Thus we can make the following definition.

\begin{defn}
For a topology $\mathcal{T}$ on a finite set $X$, let $P(\mathcal{T})$ be the preorder relation on 
$X$ obtained by setting $x \leq y$ when $U_x \subseteq U_y$.
\end{defn}

This assignment is a well-known bijection, as recorded in the following lemma.  For more background, the reader is referred to Alexandroff's work \cite{alexandroff}. 

\begin{lem}\label{lem:Bijection}
For a finite set $X$, the assignment $\mathcal{T} \mapsto P(\mathcal{T})$ gives a bijective correspondence between topologies on $X$ and preorders on $X$. Under this assignment, $T_0$ topologies correspond to partial orders.
\end{lem}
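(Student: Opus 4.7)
The plan is to establish the bijection by exhibiting an explicit inverse to $\mathcal{T} \mapsto P(\mathcal{T})$ and checking that the two compositions are the identity. First I would confirm that $P(\mathcal{T})$ is really a preorder: reflexivity is immediate from $U_x \subseteq U_x$, and transitivity is transitivity of set inclusion. It is also useful to record the reformulation $x \leq y \iff x \in U_y$, which holds because $U_y$ is the smallest open set containing $y$, so $U_x \subseteq U_y$ precisely when $x \in U_y$.

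For the inverse map, given a preorder $\leq$ on $X$ I would let $\mathcal{T}(\leq)$ be the collection of subsets $U \subseteq X$ that are \emph{downward closed}, in the sense that $y \in U$ and $x \leq y$ imply $x \in U$. A direct check shows that $\mathcal{T}(\leq)$ contains $\emptyset$ and $X$ and is closed under arbitrary unions and intersections, so it is a topology. The minimal open neighborhood of $x$ in $\mathcal{T}(\leq)$ is then the principal down-set $\{y : y \leq x\}$, which is downward closed by transitivity and is contained in every downward closed set containing $x$.

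Then I would verify the two round-trips. For $\mathcal{T} \mapsto P(\mathcal{T}) \mapsto \mathcal{T}(P(\mathcal{T}))$: every $U \in \mathcal{T}$ is downward closed in $P(\mathcal{T})$, because $y \in U$ and $x \leq y$ force $U_x \subseteq U_y \subseteq U$; conversely, every downward closed $U$ equals $\bigcup_{x \in U} U_x$ and hence lies in $\mathcal{T}$. For $\leq\ \mapsto \mathcal{T}(\leq) \mapsto P(\mathcal{T}(\leq))$: once the minimal open sets of $\mathcal{T}(\leq)$ are identified with principal down-sets, the induced preorder $x \leq' y \iff \{z : z \leq x\} \subseteq \{z : z \leq y\}$ agrees with $\leq$ by reflexivity and transitivity.

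Finally, for the $T_0$ assertion, $\mathcal{T}$ is $T_0$ precisely when distinct points have distinct minimal neighborhoods, i.e.\ $U_x = U_y$ forces $x = y$, which is exactly antisymmetry of $P(\mathcal{T})$. No step presents a real obstacle, as the argument is classical Alexandroff theory; the only thing to keep track of is the direction of the correspondence, since under the convention $x \leq y \iff U_x \subseteq U_y$ used here, open sets correspond to \emph{down}-sets of the preorder rather than up-sets.
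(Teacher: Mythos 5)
Your proof is correct; note that the paper gives no proof of this lemma at all, simply recording it as a well-known fact with a citation to Alexandroff, so there is no internal argument to compare against, and your write-up is the standard one: verify $P(\mathcal{T})$ is a preorder, construct the inverse by taking downward closed sets, check the two round trips, and identify the $T_0$ condition with antisymmetry. Your bookkeeping of the direction convention is also right and matters for consistency with the rest of the paper, since under $x \le y \iff U_x \subseteq U_y$ open sets are exactly the \emph{down}-sets, which is what Lemma~\ref{lem:size-ideals-antichains} later uses. The only assertion you leave unjustified is that distinct minimal neighborhoods already imply $T_0$; this is a one-line check (if $y \in U_x$ and $x \in U_y$ then $U_x = U_y$, so for distinct points with $U_x \ne U_y$ one of $U_x$, $U_y$ separates them), so it is a gloss rather than a gap.
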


There is a standard way to collapse a topology $\mathcal{T}$ on a set $X$ into a $T_0$ topology of the same size.  First, let $X^0$ be the set of equivalence classes formed by the relation ``$x \sim y$ if $U_x = U_y$'', and let $\pi \colon X \to X^0$ be the canonical projection. One then obtains a $T_0$ topology $\mathcal{T}^0$ on $X^0$ by setting
\begin{equation}\label{eqn:collapse}
\mathcal{T}^0 = \{ \pi(U) \mid U \in \mathcal{T} \}.
\end{equation}
\noindent The size of $\mathcal{T}^0$ is clearly equal to the size of $\mathcal{T}$. Furthermore, $P(\mathcal{T}^0)$ is the poset obtained from the preorder $P(\mathcal{T})$ in the standard way by identifying elements $x$ and $y$ such that $x \leq y$ and $y \leq x$.

\begin{example}\label{ex:poset/top example}
Let $\mathcal{T}$ be the topology on $\{1, \ldots, 8\}$ with minimal open sets $U_1 = \{1\}$, $U_2 = \{2\}$, $U_3 = \{1,2,3\}$, $U_4 = \{1,2,4\}$, $U_5 = \{5\}$, $U_6 = \{1,2,4,5,6\}$, and $U_7 = U_8 = \{1,2,4,5,6,7,8\}$.  This is not a $T_0$ topology because the points $7$ and $8$ are not distinguishable topologically.  The induced $T_0$ topology, $\mathcal{T}^0$, is homeomorphic to the topology on $\{1, \ldots, 7\}$ with minimal open sets $U_1 = \{1\}$, $U_2 = \{2\}$, $U_3 = \{1,2,3\}$, $U_4 = \{1,2,4\}$, $U_5 = \{5\}$, $U_6 = \{1,2,4,5,6\}$, and $U_7 = \{1,2,4,5,6,7\}$.  The poset $P(\mathcal{T}^0)$ is depicted in Figure~\ref{fig:poset/top example}. 
\end{example}

\begin{figure}[htbp]
\epsfig{file=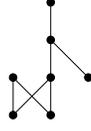,scale=.5}
\caption{The poset $P(\mathcal{T}^0)$ corresponding to the $T_0$ topology $\mathcal{T}^0$ induced by the topology $\mathcal{T}$ in Example~\ref{ex:poset/top example}.} \label{fig:poset/top example}
\end{figure}

The following lemma describes the relationship between the sequences $m(k)$ and $f(n)$, and indicates the role of $T_0$ topologies.

\begin{lem} 
Let $k \ge 2$ be an integer.
\begin{itemize}
 \item[(a)] $m(k)$ is the minimum number such that there exists a $T_0$ topology on $m(k)$ points having $k$ open sets. 
 \item[(b)] If $T(n,k) > 0$ for some $n$, then $T(n',k) > 0$ for all $n' > n$.
\end{itemize}
\end{lem}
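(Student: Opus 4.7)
My plan is to treat the two parts separately, with part (a) following directly from the collapse construction already described in the excerpt, and part (b) following by induction after constructing, from any given topology, a new one on one extra point with the same number of open sets.

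For part (a), let $m'(k)$ denote the minimum size of a $T_0$ topology with $k$ open sets. Since every $T_0$ topology is a topology, the definition of $m(k)$ immediately gives $m(k) \le m'(k)$. For the reverse inequality, I would take a topology $\mathcal{T}$ on a set $X$ with $|X| = m(k)$ and $|\mathcal{T}| = k$, and apply the collapse construction from equation~\eqref{eqn:collapse}. The resulting $T_0$ topology $\mathcal{T}^0$ sits on the set of equivalence classes $X^0$, which satisfies $|X^0| \le |X| = m(k)$, and the text already notes $|\mathcal{T}^0| = |\mathcal{T}| = k$. Hence $m'(k) \le m(k)$, yielding equality.

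For part (b), it suffices by induction on $n' - n$ to prove the case $n' = n+1$, i.e., to enlarge any topology by one point without changing the number of open sets. Given $\mathcal{T}$ on $X$, pick any $x \in X$, let $x' \notin X$ be a new point, and set $X' = X \cup \{x'\}$. Define, for each $U \in \mathcal{T}$,
\begin{equation*}
U' = \begin{cases} U & \text{if } x \notin U, \\ U \cup \{x'\} & \text{if } x \in U, \end{cases}
\end{equation*}
and put $\mathcal{T}' = \{U' : U \in \mathcal{T}\}$. I would then check the three conditions in the definition of a topology: $\emptyset' = \emptyset$ and $X' = X \cup \{x'\}$ are both in $\mathcal{T}'$, and closure under union and intersection follows from the case analysis $x \in U \iff x' \in U'$, so $U'_1 \cap U'_2 = (U_1 \cap U_2)'$ and $U'_1 \cup U'_2 = (U_1 \cup U_2)'$. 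Finally, since $U = U' \cap X$, the map $U \mapsto U'$ is a bijection, so $|\mathcal{T}'| = |\mathcal{T}| = k$.

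In the poset language of Lemma~\ref{lem:Bijection}, this construction simply adds to the preorder $P(\mathcal{T})$ a new element $x'$ which is equivalent to $x$ (i.e., $x' \le y \iff x \le y$ and $y \le x' \iff y \le x$); duplicating an element in this way clearly preserves the set of order ideals up to this relabeling. Neither step presents a real obstacle — the only care needed is the routine verification that $\mathcal{T}'$ is closed under the topology operations in part (b), which reduces to the observation that $x'$ belongs to $U'$ if and only if $x$ belongs to $U$.
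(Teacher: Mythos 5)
Your proof is correct and takes essentially the same approach as the paper: part (a) via the $T_0$-collapse of equation~\eqref{eqn:collapse}, and part (b) by adjoining topologically indistinguishable points so the number of open sets is unchanged. The only cosmetic difference is that the paper inserts all $n'-n$ new points at once into the minimal open set of a point that is minimal in $P(\mathcal{T})$, whereas you duplicate an arbitrary point one at a time and induct; both are the same underlying idea.
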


\begin{proof}
(a) A topology $\mathcal{T}$ with $k$ open sets on a minimal number of points must be a $T_0$ topology, for otherwise $\mathcal{T}^0$, as defined in equation~\eqref{eqn:collapse}, is a topology with $k$ open sets on fewer points. Thus adding the $T_0$ restriction does not increase the minimal number of points needed for a topology with $k$ open sets.

(b) Suppose $\mathcal{T}$ is a topology of size $k$ on a set $X$ with $n$ points. Pick a point $x \in X$ that is minimal in the preorder $P(\mathcal{T})$, and form the topology $\mathcal{T'}$ by inserting $n'-n$ additional points into $U_x$. Then $\mathcal{T'}$ is a topology of size $k$ on $n'$ points.
\end{proof}

\begin{rem}\label{rem:explainT0}
From the previous lemma, it follows that $f(n)$ is the smallest integer such that $m\big(f(n)\big) > n$.  We stress that the analogous statement is not true for $T_0$ topologies, as there is no analogue of part (b) of the lemma for $T_0$ topologies.  Indeed, a $T_0$ topology on $n$ points necessarily has at least $n+1$ open sets.
\end{rem}

In view of the previous lemma we focus our attention on $T_0$ topologies and posets throughout the rest of the paper. For the remainder of this section we investigate how to calculate the size of a $T_0$ topology using properties of its associated poset.

\begin{defn}
An \emph{order ideal} in a poset $P$ is a subset $I \subseteq P$ such that if $y \in I$ and $x<y$, then $x \in I$.  A \emph{dual order ideal} in $P$ is a subset $I \subseteq P$ such that if $x \in I$ and $x<y$, then $y \in I$.
\end{defn}

Order ideals are sometimes called \emph{down-sets}, while dual order ideals may be called \emph{up-sets} or \emph{filters}.

\begin{defn}
Let $P$ be a poset.  An \emph{antichain} in $P$ is a subset $A \subseteq P$ such that $x$ and $y$ are incomparable for all distinct $x, y \in A$.
\end{defn}

The following lemma is a well-known property of the bijection from Lemma \ref{lem:Bijection}.

\begin{lem}\label{lem:size-ideals-antichains}
Let $\mathcal{T}$ be a $T_0$ topology.  The following correspondences are bijections:
\begin{equation*}
\{\text{open sets in } \mathcal{T}\} \longleftrightarrow \{\text{order ideals in } P(\mathcal{T})\} \longleftrightarrow \{\text{antichains in } P(\mathcal{T})\}.
\end{equation*}
\end{lem}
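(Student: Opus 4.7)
The plan is to establish each of the two bijections separately, using the fact that in the finite setting every open set is the union of the minimal open sets $U_x$ of its points. The map in one direction is essentially forced: send an open set $U$ to itself as a subset of $X = P(\mathcal{T})$, and send an order ideal $I$ to its set of maximal elements.

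For the first bijection, open sets $\leftrightarrow$ order ideals, I would first verify that every open set $U \in \mathcal{T}$ is an order ideal of $P(\mathcal{T})$. If $y \in U$ then $U_y \subseteq U$ by minimality, so whenever $x \le y$ (i.e.\ $U_x \subseteq U_y$) we get $x \in U_x \subseteq U$. Conversely, given an order ideal $I$, I would define $U = \bigcup_{x \in I} U_x$, which is open as a union of open sets, and check $U = I$. Containment $I \subseteq U$ is immediate since $x \in U_x$. For the reverse, if $z \in U_x$ for some $x \in I$, then minimality of $U_z$ forces $U_z \subseteq U_x$, i.e.\ $z \le x$, so $z \in I$ because $I$ is an order ideal. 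The two assignments are visibly mutually inverse.

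For the second bijection, order ideals $\leftrightarrow$ antichains, I would use the maps $I \mapsto \max(I)$ and $A \mapsto \langle A \rangle := \{x \in P : x \le a \text{ for some } a \in A\}$. The set $\max(I)$ is an antichain by definition, and $\langle A \rangle$ is clearly an order ideal. To show these are inverse, I would argue: (i) if $A$ is an antichain, the maximal elements of $\langle A \rangle$ are precisely $A$, since each $a \in A$ is maximal in $\langle A \rangle$ (any $b \ge a$ in $\langle A \rangle$ lies below some $a' \in A$, forcing $a \le a'$ and hence $a = a'$ by the antichain condition), and conversely any maximal element of $\langle A \rangle$ must equal some $a \in A$ it lies below; (ii) if $I$ is an order ideal with $A = \max(I)$, then $\langle A \rangle \subseteq I$ since $I$ is an order ideal containing $A$, and $I \subseteq \langle A \rangle$ because finiteness guarantees that every $x \in I$ lies below some maximal element of $I$.

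Neither step presents a real obstacle; the only subtlety is remembering to invoke finiteness for the existence of maximal elements, and to use the $T_0$ hypothesis implicitly through Lemma~\ref{lem:Bijection} so that $P(\mathcal{T})$ is genuinely a partial order rather than a preorder (otherwise "antichain" and "maximal element" would need to be interpreted modulo equivalence).
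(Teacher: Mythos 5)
Your proof is correct: the identification of open sets with order ideals via the minimal open sets $U_x$, and of order ideals with antichains via $I \mapsto \max(I)$ and $A \mapsto \{x : x \le a \text{ for some } a \in A\}$ (using finiteness for the existence of maximal elements and the $T_0$ hypothesis so that $P(\mathcal{T})$ is a genuine partial order), is exactly the standard argument. The paper itself offers no proof of this lemma, citing it as a well-known property of the correspondence in Lemma~\ref{lem:Bijection}, so your write-up simply supplies the expected details and there is nothing to fault.
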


\begin{defn}
Let $j(P)$ be the number of order ideals in a poset $P$.
\end{defn}

Lemma~\ref{lem:size-ideals-antichains} implies that $j(P(\mathcal{T})) = |\mathcal{T}|$.

\begin{defn}\label{defn:subposets}
For a poset $P$ and an element $x \in P$, let $P_x$ be the poset obtained from $P$ by removing all elements comparable to $x$.  Let $P \setminus x$ be the poset obtained from $P$ by removing only the element $x$.
\end{defn}

Given a poset $P$, the number of order ideals in $P$ can be computed in an iterative manner using the following lemma, which is a key tool in the proof of the main results in the paper.

\begin{lem}\label{lem:iterative counting}
Given a poset $P$ and an element $x \in P$,
\begin{equation*}
j(P) = j(P \setminus x) + j(P_x). 
\end{equation*}
\end{lem}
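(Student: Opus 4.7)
The plan is to partition the ideals of $P$ according to their image under the forgetful map $\phi\colon \{\text{ideals of } P\} \to \{\text{ideals of } P \setminus x\}$ defined by $\phi(I) = I \setminus \{x\}$. Observing that $I \setminus \{x\}$ is automatically a down-set in $P \setminus x$ whenever $I$ is a down-set in $P$ makes $\phi$ well-defined, and then the identity $j(P) = \sum_J |\phi^{-1}(J)|$ reduces the lemma to computing fiber sizes.

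For a fixed ideal $J$ of $P \setminus x$, the only candidates for elements of $\phi^{-1}(J)$ are $J$ itself and $J \cup \{x\}$, so the fiber has size at most two. I would next check exactly when each candidate is a down-set of $P$: the set $J$ is a down-set of $P$ precisely when no element of $J$ lies strictly above $x$, while $J \cup \{x\}$ is a down-set of $P$ precisely when $J$ already contains every element strictly below $x$. The key intermediate observation is that if $J$ does contain some $y > x$, then for any $z < x$ one has $z < y$ in $P \setminus x$ (since $z \neq x$), forcing $z \in J$; hence the obstruction to $J$ being an ideal of $P$ automatically supplies what is needed to make $J \cup \{x\}$ an ideal, and $|\phi^{-1}(J)| \in \{1,2\}$ always.

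The fiber has size exactly $2$ precisely when $J$ both contains every element strictly below $x$ and avoids every element strictly above $x$. For such $J$, the ``extra'' part $K := J \cap P_x$ consists only of elements incomparable to $x$, and a short check shows $K$ is an order ideal of $P_x$; conversely, every order ideal $K$ of $P_x$ lifts to such a $J$ via $K \mapsto K \cup \{z : z < x\}$. Verifying that this lifted $J$ really is an order ideal of $P \setminus x$ is the only step requiring a brief case argument: for $y \in K$ and $z < y$ in $P$ with $z \neq x$, ruling out $z > x$ (which would force $y > x$, contradicting $y \in P_x$) leaves $z$ either strictly below $x$ or incomparable to $x$, and either way $z \in J$.

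Assembling the pieces,
\begin{equation*}
j(P) \;=\; \sum_{J} |\phi^{-1}(J)| \;=\; j(P \setminus x) \;+\; \#\{J : |\phi^{-1}(J)| = 2\} \;=\; j(P \setminus x) + j(P_x),
\end{equation*}
which is the claimed identity. The main thing to be careful about is keeping the two opposing conditions (``no element above $x$'' and ``all elements below $x$'') straight simultaneously and verifying that they interact with the hypothesis ``$J$ is an order ideal of $P \setminus x$'' in precisely the way needed to force $|\phi^{-1}(J)| \geq 1$ and to set up the bijection with order ideals of $P_x$; once that case analysis is in place, the identity drops out by counting.
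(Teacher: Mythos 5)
Your proof is correct, but it takes a genuinely different route from the paper's. You work directly with order ideals: you fiber the map $I \mapsto I \setminus \{x\}$ from ideals of $P$ to ideals of $P \setminus x$, check that every fiber has size $1$ or $2$ (the nice observation that an obstruction $y > x$ in $J$ forces all $z < x$ into $J$, so the fiber is never empty), and identify the size-$2$ fibers with ideals of $P_x$ via $J \mapsto J \cap P_x$. The paper instead passes through the ideal--antichain bijection of Lemma~\ref{lem:size-ideals-antichains} and splits antichains $A \subseteq P$ by whether $x \in A$: if $x \notin A$ then $A$ is an antichain of $P \setminus x$, and if $x \in A$ then $A \setminus \{x\}$ is an antichain of $P_x$, giving the identity in two lines because the antichain correspondence is already in hand. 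Your argument buys self-containedness --- it never invokes the antichain machinery and exhibits the count at the level of ideals, which is the object actually being counted --- at the cost of the longer case analysis you flag at the end; the paper's buys brevity by leaning on a lemma it has already stated. Both establish the identity completely.
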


\begin{proof}
Suppose $x$ is an element of $P$, and consider an antichain $A \subseteq P$.  If $x$ is not in $A$, then $A$ is an antichain in $P \setminus x$.  If $x \in A$, then no element comparable to $x$ is in $A$, so $A \setminus x$ is an antichain in $P_x$.
\end{proof}

Counting the number of antichains in a poset is a \#P-complete problem (see \cite{provan}).  This computational difficulty is the reason that the data presented in Tables~\ref{table:min n for k} and~\ref{table:min bad k for n} do not consider posets with more than $10$ elements.  However, the main proofs in this article build posets by inductively adding a single element at a time, and hence are undisturbed by the computational complexity.

Two elementary operations for constructing posets are the \emph{direct sum} (also called \emph{disjoint union}) and the \emph{ordinal sum} of two posets.

\begin{defn}
Let $P$ and $Q$ be posets on the sets $X$ and $Y$, respectively, with order relations $R$ and $S$, respectively.  The direct sum $P + Q$ is the poset defined on $X \cup Y$, with order relations $R \cup S$.  The ordinal sum $P \oplus Q$ is the poset defined on $X \cup Y$, with order relations $R \cup S \cup \{x \le y \mid x \in X, y \in Y\}$.
\end{defn}

The number of ideals in a poset resulting from these operations can be calculated easily.  The proof of this lemma is straightforward, for example see \cite{ernestege2}. 

\begin{lem}\label{lem:operations}
Let $P$ and $Q$ be posets.  Then
\begin{eqnarray}
j(P+Q) &=& j(P) \cdot j(Q), \text{ and}\label{eqn:j(P+Q)}\\
j(P \oplus Q) &=& j(P) + j(Q) - 1.\label{eqn:j(P oplus Q)}
\end{eqnarray}
\end{lem}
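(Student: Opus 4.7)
The plan is to prove each identity by directly classifying the order ideals of the constructed poset in terms of order ideals of $P$ and $Q$.

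For the direct sum formula, the key observation is that in $P+Q$, no element of $X$ is comparable to any element of $Y$. Consequently, a subset $I \subseteq X \cup Y$ is an order ideal of $P+Q$ if and only if $I \cap X$ is an order ideal of $P$ and $I \cap Y$ is an order ideal of $Q$. The map $I \mapsto (I \cap X, I \cap Y)$ is therefore a bijection between order ideals of $P+Q$ and pairs of order ideals of $P$ and $Q$, which yields $j(P+Q) = j(P) \cdot j(Q)$.

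For the ordinal sum formula, I would partition the order ideals of $P \oplus Q$ into two classes according to whether or not they meet $Y$. If $I$ contains no element of $Y$, then $I$ is simply an order ideal of $P$, and every order ideal of $P$ arises this way; this contributes $j(P)$ order ideals. If $I$ contains some $y \in Y$, then since every $x \in X$ satisfies $x \le y$ in $P \oplus Q$ and $I$ is downward closed, all of $X$ lies in $I$. Hence $I = X \cup I'$ for some nonempty order ideal $I'$ of $Q$, and conversely any such nonempty $I'$ produces a valid order ideal of $P \oplus Q$. This contributes $j(Q) - 1$ order ideals (subtracting off the empty set). Summing the two mutually exclusive classes gives $j(P \oplus Q) = j(P) + (j(Q) - 1)$.

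There is no real obstacle here; the only point requiring mild care is to ensure the two cases in the ordinal sum argument are disjoint and exhaustive, and that the empty order ideal is counted exactly once (it falls in the first class as the empty order ideal of $P$, not in the second class, which explains the $-1$).
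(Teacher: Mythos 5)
Your proof is correct, and since the paper itself omits the argument (it simply remarks that the proof is straightforward and cites Ern\'e--Stege), your direct bijective classification of order ideals is exactly the standard argument being alluded to. The only minor point worth tightening is the final parenthetical: the $-1$ in the ordinal sum formula arises because the empty ideal of $Q$ would reproduce the set $X$, which already lies in the first class, so it must be excluded from the second class; your two classes are disjoint by construction, and with this phrasing the count $j(P)+\bigl(j(Q)-1\bigr)$ is immediate.
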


\begin{defn}
Let $\bullet$ denote the poset consisting of a single element.
\end{defn}

The following is an immediate corollary to Lemma~\ref{lem:operations}, which is used in Proposition \ref{prop:singles} to give a simple but efficient algorithm for constructing a topology with $k$ open sets based on the base 2 expansion of $k$.

\begin{cor}\label{cor:x2+1}
For any poset $P$,
\begin{eqnarray*}
j(P + \bullet) &=& 2j(P);\\
j(P\oplus \bullet) &=& j(P) + 1
\end{eqnarray*}
\end{cor}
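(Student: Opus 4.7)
The plan is to observe that this corollary is an immediate specialization of Lemma~\ref{lem:operations} to the case $Q = \bullet$, so the only substantive input I need is the value $j(\bullet)$. The single-element poset has exactly two order ideals, namely $\emptyset$ and $\{\bullet\}$ itself, so $j(\bullet) = 2$.

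With that in hand, the first identity follows by substituting $Q = \bullet$ into equation~\eqref{eqn:j(P+Q)}:
\begin{equation*}
j(P + \bullet) = j(P)\cdot j(\bullet) = 2\,j(P).
\end{equation*}
The second identity follows by substituting $Q = \bullet$ into equation~\eqref{eqn:j(P oplus Q)}:
\begin{equation*}
j(P \oplus \bullet) = j(P) + j(\bullet) - 1 = j(P) + 1.
\end{equation*}

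There is essentially no obstacle here, since Lemma~\ref{lem:operations} has already done all the work; the proof is a one-line instantiation once the quantity $j(\bullet) = 2$ is noted. If one wanted a fully self-contained argument that bypassed Lemma~\ref{lem:operations}, an alternative would be to use Lemma~\ref{lem:iterative counting} with $x = \bullet$: in $P + \bullet$ the element $\bullet$ is incomparable to everything, so $(P+\bullet)\setminus\bullet = P$ and $(P+\bullet)_\bullet = P$, giving $j(P+\bullet) = 2j(P)$; in $P \oplus \bullet$ the element $\bullet$ is comparable to every element, so $(P\oplus\bullet)\setminus\bullet = P$ and $(P\oplus\bullet)_\bullet$ is the empty poset with $j = 1$, giving $j(P\oplus\bullet) = j(P) + 1$. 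Either route is completely routine.
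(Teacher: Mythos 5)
Your proof is correct and matches the paper's own argument: both simply specialize Lemma~\ref{lem:operations} to $Q = \bullet$ after noting $j(\bullet) = 2$ (the paper counts the two antichains of $\bullet$, you count its two order ideals, which amounts to the same thing). The alternative route via Lemma~\ref{lem:iterative counting} is a fine extra observation but not needed.
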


\begin{proof}
These equalities follow directly from Lemma~\ref{lem:operations} because the poset $\bullet$ has two antichains: the emptyset, and the single element $\bullet$.
\end{proof}

Lemma~\ref{lem:operations} implies the following result, which provides a crude bound on the number of points needed to make a topology with a prescribed number of open sets.

\begin{cor}\label{cor:binary/factor bound}
For all $k \ge 3$, 
\begin{equation*}
m(k) \le \min\left\{1 + m(k-1), \min\limits_{\substack{1<d<k \\ d \mid k}} \{m(d) + m(k/d)\}\right\}.
\end{equation*}
\end{cor}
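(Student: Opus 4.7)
The plan is to obtain each of the two bounds in the minimum by constructing an explicit topology of the required size, using the operations from Lemma~\ref{lem:operations} and Corollary~\ref{cor:x2+1}. Throughout, I use the correspondence between $T_0$ topologies and posets together with the fact that $m(k)$ equals the minimum number of elements in a poset with $k$ order ideals.

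First, I would handle the bound $m(k) \le 1 + m(k-1)$. Let $P$ be a poset on $m(k-1)$ elements with $j(P) = k-1$, which exists by the definition of $m(k-1)$. Form the ordinal sum $P \oplus \bullet$, which has $m(k-1)+1$ elements. By Corollary~\ref{cor:x2+1}, $j(P \oplus \bullet) = j(P) + 1 = k$. Hence there is a poset on $m(k-1)+1$ elements with $k$ order ideals, which gives $m(k) \le m(k-1)+1$.

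Next, for each divisor $d$ of $k$ with $1 < d < k$, I would construct a poset with exactly $k$ order ideals on $m(d) + m(k/d)$ elements. Choose posets $P$ and $Q$ achieving the minima $m(d)$ and $m(k/d)$, so $j(P) = d$ and $j(Q) = k/d$. Their direct sum $P + Q$ has $m(d)+m(k/d)$ elements, and by equation~\eqref{eqn:j(P+Q)} of Lemma~\ref{lem:operations},
\begin{equation*}
j(P+Q) = j(P) \cdot j(Q) = d \cdot (k/d) = k.
\end{equation*}
This gives $m(k) \le m(d)+m(k/d)$ for every such $d$, and therefore $m(k)$ is bounded above by the minimum of these quantities over all nontrivial divisors.

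Combining the two bounds yields the stated inequality. There is no real obstacle here: both pieces are immediate consequences of the addition formulas for $j$ under direct and ordinal sum, together with the observation that the single-element poset $\bullet$ contributes $+1$ (via $\oplus$) or a factor of $2$ (via $+$) to the ideal count. The only minor care needed is to note that the hypothesis $k \ge 3$ ensures $k-1 \ge 2$, so $m(k-1)$ is defined, and that the factor bound is vacuous (hence automatically satisfied) when $k$ is prime, in which case only the first term in the minimum is active.
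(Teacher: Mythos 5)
Your proof is correct and is exactly the argument the paper intends: the corollary is stated as an immediate consequence of Lemma~\ref{lem:operations} (and Corollary~\ref{cor:x2+1}), realizing $1+m(k-1)$ via $P \oplus \bullet$ and $m(d)+m(k/d)$ via the direct sum $P+Q$. Your added remarks about $k \ge 3$ guaranteeing $m(k-1)$ is defined and the divisor minimum being vacuous for prime $k$ are fine and complete the (omitted) details.
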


\section{Exponential bounds}\label{sec:bounds}

Here we describe three related logarithmic upper bounds for $m(k)$.  In turn, these yield exponential lower bounds for $f(n)$, and consequently, exponentially large intervals of $k$ for which $T(n,k) > 0$.  The proofs of Propositions~\ref{prop:singles} and~\ref{prop:doubles} and Theorem~\ref{thm:triples} are constructive: given an integer $k \ge 2$, a poset $P$ having a ``small'' number of elements is built so that $j(P) = k$.  For large values of $n$, Corollaries~\ref{cor:singles}, \ref{cor:doubles}, and~\ref{cor:triples} give successively larger lower bounds for $f(n)$.  It may be possible to increase this bound even further, although just how much further the function $f(n)$ can be increased is still an open question.

One of the key objects in this section is the binary expansion of $k$.

\begin{defn}
Set $\ell = \ell(k) = \lfloor \log_2 k \rfloor$.
\end{defn}

\begin{defn}
Given a positive integer $k = \epsilon_{\ell} 2^{\ell} + \cdots + \epsilon_1 2^1 + \epsilon_0 2^0$ where $\epsilon_i \in \{0,1\}$ and $\epsilon_{\ell} = 1$, let $k_{\textsf 2}$ be the string $\epsilon_{\ell} \cdots \epsilon_1 \epsilon_0$.  Each $\epsilon_i$ is a \emph{bit}, and a bit will henceforth be written in sans-serif font as $\textsf{0}$ or $\textsf{1}$.
\end{defn}

The constructions in Propositions~\ref{prop:singles} and~\ref{prop:doubles} and Theorem~\ref{thm:triples} are similar in that they each give a blueprint for constructing a poset with $k$ elements based on the string $k_{\textsf 2}$, while trying to use as few elements as possible. Theorem~\ref{thm:triples} gives the best bound for $m(k)$ when $k \geq 10$.  However, it is also the most complex of the three procedures.  We include the other methods for three main reasons: in some cases the simpler methods are more effective, the construction in Proposition~\ref{prop:singles} is partly used in the proof of Theorem~\ref{thm:triples}, and the proof of Proposition~\ref{prop:doubles} elucidates the proof of Theorem~\ref{thm:triples} by motivating and explaining the ideas behind the more complicated variant. It should be noted that the construction in Proposition~\ref{prop:singles} has appeared previously, for example, see \cite{vollert}.

In each construction given in this section, we read the string $k_\textsf{2}$ from left to right, building up the poset at each bit. We start with the empty poset at the first bit, and add a disjoint element to the poset for each new bit examined. At times we add maximal elements, covering selected parts of the poset, to adjust for the value of recently read bits. The difference in the constructions lies in how and when the maximal elements are added. A common aspect of each is that the disjoint elements added with the appearance of each bit form a maximal antichain of length $\ell$. This observation is useful for drawing Hasse diagrams: we will draw this antichain at the lowest level, and the elements arising from the values of the bits will be positioned over it.

\begin{prop}\label{prop:singles}
For all $k \ge 2$,
\begin{equation*}
m(k) \le 2\lfloor \log_2 k \rfloor.
\end{equation*}
\end{prop}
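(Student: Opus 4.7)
The plan is constructive, mirroring Horner's evaluation of $k$'s binary expansion by means of the two operations from Corollary~\ref{cor:x2+1}. Write $k = \epsilon_\ell 2^\ell + \cdots + \epsilon_0$ with $\epsilon_\ell = \mathsf{1}$, so $\ell = \lfloor \log_2 k \rfloor$. Horner's identity gives $k = v_0$, where $v_\ell = 1$ and $v_i = 2 v_{i+1} + \epsilon_i$ for $i = \ell-1, \ldots, 0$. I would build a sequence of posets $P_\ell, P_{\ell-1}, \ldots, P_0$ with $j(P_i) = v_i$, in which case $P_0$ is a poset with exactly $k$ order ideals.

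Start with $P_\ell = \emptyset$, which satisfies $j(P_\ell) = 1 = v_\ell$. For each $i$ from $\ell - 1$ down to $0$, first form $P_i' = P_{i+1} + \bullet$, so that $j(P_i') = 2 j(P_{i+1})$ by Corollary~\ref{cor:x2+1}. Then set $P_i = P_i'$ if $\epsilon_i = \mathsf{0}$, and $P_i = P_i' \oplus \bullet$ if $\epsilon_i = \mathsf{1}$; in the latter case Corollary~\ref{cor:x2+1} gives $j(P_i) = j(P_i') + 1$. In either case $j(P_i) = 2 v_{i+1} + \epsilon_i = v_i$, so by induction $j(P_0) = k$. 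The $\ell$ disjoint elements introduced during the $+\bullet$ steps form the advertised bottom-level antichain of length $\ell$, and each $\mathsf{1}$-bit among $\epsilon_{\ell-1}, \ldots, \epsilon_0$ contributes a single additional element placed above everything built so far.

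Finally, I would count elements: the $+\bullet$ step is carried out $\ell$ times, contributing $\ell$ elements, while the $\oplus \bullet$ step is carried out once per bit $\epsilon_i = \mathsf{1}$ with $i < \ell$, contributing at most $\ell$ further elements. Hence $P_0$ has at most $2\ell = 2\lfloor \log_2 k \rfloor$ elements, giving the desired bound on $m(k)$. There is no real obstacle once one observes the Horner/Corollary~\ref{cor:x2+1} correspondence; the only subtlety worth flagging is that the leading bit $\epsilon_\ell = \mathsf{1}$ never costs an element, since it is absorbed into the base case $j(\emptyset) = 1$, which is precisely what allows the bound to be $2\ell$ rather than $2\ell + 1$.
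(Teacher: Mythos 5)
Your proposal is correct and is essentially the paper's own argument: the same left-to-right (Horner-style) processing of the binary expansion, using $+\,\bullet$ to double and $\oplus\,\bullet$ to add one via Corollary~\ref{cor:x2+1}, with the same count of at most $\ell$ doubling elements plus at most $\ell$ elements for the $\mathsf{1}$-bits after the leading one. The only difference is cosmetic indexing of the intermediate posets.
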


\begin{proof}
Let $k \geq 2$ be given and consider the binary expansion $k_{\textsf 2} = \epsilon_{\ell} \cdots \epsilon_1 \epsilon_0$, where $\epsilon_{\ell} = \textsf{1}$. We inductively form posets $P_0, \ldots, P_{\ell}$ with the property that 
\[ \big(j(P_i)\big)_{\textsf{2}} = \epsilon_{\ell} \cdots \epsilon_{\ell - i}\]
for each $i$. In particular, $j(P_\ell) = k$. 

Let $P_0$ be the empty set.  For each $i > 0$, consider the bit $\epsilon_{\ell-i}$, and define
\begin{equation*}
P_i = \begin{cases}
P_{i-1} + \bullet, & \text{if}~ \epsilon_{\ell-i} = \textsf{0};\\
\left(P_{i-1} + \bullet\right) \oplus \bullet, & \text{if}~ \epsilon_{\ell-i}= \textsf{1}.
\end {cases}
\end{equation*}
\noindent Using Corollary \ref{cor:x2+1}, we see that
\begin{equation*}
j(P_i) = \begin{cases}
2j(P_{i-1}), & \text{if}~ \epsilon_{\ell-i} = \textsf{0};\\
2j(P_{i-1}) +1, & \text{if}~ \epsilon_{\ell-i}=\textsf{1}.
\end {cases}
\end{equation*}
\noindent Therefore $j(P_i)$ has binary expansion $\epsilon_{\ell} \epsilon_{\ell - 1}\cdots \epsilon_{\ell-i}$.

The number of elements used in $P_{\ell}$ is $\ell +t - 1$, where $t$ is the number of $\textsf{1}$s in $k_2$. An example of the poset $P_\ell$ for $k = 105$ is drawn in Figure~\ref{fig:singles-ex}.  We have $t \leq \ell + 1$, so $m(k) \le 2\lfloor \log_2 k \rfloor$.
\end{proof}

\begin{cor}\label{cor:singles}
For all $n \ge 1$,
\begin{equation*}
f(n) > 2^{n/2}.
\end{equation*}
\noindent That is, $T(n,k) > 0$ for all $k \in [2, 2^{n/2}]$.
\end{cor}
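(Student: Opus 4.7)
The plan is to simply invert the logarithmic bound of Proposition~\ref{prop:singles}. The key translation, recorded in Remark~\ref{rem:explainT0}, is that $f(n)$ is the smallest integer $k$ with $m(k) > n$; equivalently, $T(n,k) > 0$ precisely when $m(k) \le n$. So to prove $f(n) > 2^{n/2}$ it suffices to show that $m(k) \le n$ for every integer $k$ in $[2, 2^{n/2}]$.

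First I would fix $n \ge 1$ and an integer $k$ with $2 \le k \le 2^{n/2}$. Proposition~\ref{prop:singles} supplies the bound $m(k) \le 2 \lfloor \log_2 k \rfloor$, so the task reduces to checking $2\lfloor \log_2 k \rfloor \le n$. Taking logarithms of $k \le 2^{n/2}$ gives $\log_2 k \le n/2$, and since $\lfloor \log_2 k \rfloor$ is an integer, monotonicity of the floor yields $\lfloor \log_2 k \rfloor \le \lfloor n/2 \rfloor$. Doubling, $m(k) \le 2\lfloor n/2 \rfloor \le n$, as required.

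A one-line sanity check covers the case when $n$ is odd, where $2^{n/2}$ is irrational: any integer $k \le 2^{n/2}$ actually satisfies $\log_2 k \le (n-1)/2$, so $m(k) \le n-1 < n$ in that case, strengthening the bound slightly. Concluding, every $k \in [2, 2^{n/2}]$ has $T(n,k) > 0$, and hence $f(n) > 2^{n/2}$.

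There is really no obstacle here; the corollary is a direct transcription of Proposition~\ref{prop:singles} through the $m$--$f$ duality. The only place one has to be the least bit careful is the floor estimate for odd $n$, and even that is immediate.
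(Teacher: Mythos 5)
Your proposal is correct and is exactly the intended deduction: the paper states Corollary~\ref{cor:singles} without a separate proof because it follows directly from Proposition~\ref{prop:singles} together with the relation (via Remark~\ref{rem:explainT0}) that $T(n,k)>0$ precisely when $m(k)\le n$, which is the route you take. Your floor-function bookkeeping $2\lfloor \log_2 k\rfloor \le 2\lfloor n/2\rfloor \le n$ is valid, so nothing is missing.
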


\begin{figure}[htbp]
\centering
\epsfig{file=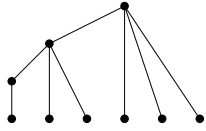,scale=.5}
\caption{The method of Proposition~\ref{prop:singles} applied to $k = 105$, where $k_{\textsf 2} = \textsf{1101001}$.}\label{fig:singles-ex}
\end{figure}

Note that $2\lfloor \log_2 105 \rfloor$ is greater than the number of elements in the poset in Figure~\ref{fig:singles-ex}, but this should not be surprising given the number of $\textsf{0}$s in $105_{\textsf 2}$.  Situations where the procedures of this section may be more efficient will be discussed in Section~\ref{sec:better efficiency}.

The procedure described in the proof of Proposition~\ref{prop:singles} examines one bit of $k_{\textsf 2}$ at a time, adding an element for each position in the string, and possibly adding another element if the bit is $\textsf{1}$, using Corollary \ref{cor:x2+1} to keep track of the number of ideals.  Proposition \ref{prop:doubles} below increases the efficiency by looking at pairs of bits at a time. To do this, we first need an appropriate replacement for Corollary \ref{cor:x2+1} to keep track of the number of ideals.

\begin{defn}
A poset is of \emph{double type} if it contains a dual order ideal isomorphic to the poset $\bullet \oplus \bullet$.
\end{defn}

The importance of the poset $\bullet \oplus \bullet$ is that $j(\bullet \oplus \bullet) = 3$, and it also has a dual order ideal $\bullet$ with $j(\bullet) = 2$. This allows us to adjust for the values of binary substrings $\textsf{11}$ and $\textsf{10}$ in $k_\textsf{2}$ by adding a single maximal element, as the following lemma shows.

\begin{lem}\label{lem:doubletype}
Given a poset $P$ of double type, and $r \in \{2,3\}$, there is a poset $P'$ of double type and with $j(P') = 4 j(P)+r$, formed by adding three elements to the poset $P$. 
\end{lem}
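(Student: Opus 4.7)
The plan is to construct $P'$ by adding three new elements $c$, $d$, $e$ to $P$ in such a way that $P' \setminus e \cong P + \bullet + \bullet$ and $e$ is maximal in $P'$. Lemmas \ref{lem:operations} and \ref{lem:iterative counting} then give $j(P') = j(P' \setminus e) + j(P'_e) = 4 j(P) + j(P'_e)$, so the task reduces to choosing the down-set of $e$ so that $j(P'_e)$ equals the prescribed value $r \in \{2, 3\}$ while keeping $P'$ of double type. Throughout, let $\{a, b\}$ with $a < b$ denote a dual order ideal of $P$ witnessing its double type, so $b$ is maximal in $P$ and the only element of $P$ strictly above $a$ is $b$.

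In the case $r = 3$, I would let $c$ and $d$ be incomparable to each other and to every element of $P$, and place $e$ above $c$, $d$, and every element of $P \setminus \{a, b\}$. The set $P \setminus \{a, b\}$ is downward closed in $P$ (since the only strict upper bound of $a$ is $b$ and $b$ is maximal), so this declaration is consistent and places neither $a$ nor $b$ below $e$. The elements of $P'$ comparable to $e$ are therefore exactly $\{c, d, e\} \cup (P \setminus \{a, b\})$, so $P'_e$ is the two-element chain $a < b$, contributing $j(P'_e) = 3$. Because $a$ still has $b$ as its only strict upper bound in $P'$ and $b$ is still maximal, $\{a, b\}$ remains a dual order ideal isomorphic to $\bullet \oplus \bullet$, and $P'$ is of double type.

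In the case $r = 2$, I would instead let $e$ be above $c$, $d$, and every element of $P \setminus \{b\}$; downward closedness is again immediate from $b$ being maximal in $P$. Now $a < e$ in $P'$, so the old dual order ideal $\{a, b\}$ is no longer upward closed. However, $b$ is still not below $e$, so the elements comparable to $e$ are $\{c, d, e\} \cup (P \setminus \{b\})$, leaving $P'_e = \{b\}$ with $j(P'_e) = 2$. The main subtlety---preserving the double type property after the original witness is destroyed---is resolved by the newly introduced chain: the element $d$ was added isolated from $P$ and from $c$, so its unique strict upper bound in $P'$ is the maximal element $e$, and hence $\{d, e\}$ is a new dual order ideal of $P'$ isomorphic to $\bullet \oplus \bullet$.
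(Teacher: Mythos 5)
Your proof is correct and follows essentially the same strategy as the paper: add two isolated points to quadruple $j$, then attach one new maximal element whose set of incomparable elements is a dual order ideal with $j$ equal to $r$, computing via Lemma~\ref{lem:iterative counting} and exhibiting a two-element chain among the new points (or the old witness) to preserve double type. The only cosmetic difference is in the $r=2$ case, where the paper leaves one of the two new isolated points out from under the new maximal element while you leave out the top of the old witnessing chain; both choices work for the same reasons.
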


\begin{proof}
Add two elements to $P$ to form the poset $Q = P + \{x_1\} + \{x_2\}$.  By Corollary \ref{cor:x2+1}, we have $j(Q) = 4 j(P)$. 

If $r=2$ (that is, $r_{\textsf 2} = \textsf{10}$), form $P'$ by adding an element $y$ to $Q$, greater than everything except $x_2$. The subposet $\{x_1 \lessdot y\} \cong \bullet \oplus \bullet$ is a dual order ideal in $P'$, and thus $P'$ is of double type.  Applying Lemma~\ref{lem:iterative counting} (with $x = y$) implies that
\begin{equation*}
j(P') = j(Q) + j(\{x_2\}) = 4j(P) + 2.
\end{equation*}

Similarly, if $r=3$ (that is, $r_{\textsf 2} = \textsf{11}$), form $P'$ by adding an element $y$ to $Q$, greater than everything except the dual order ideal $\bullet \oplus \bullet$ required to be in $P$.  This $\bullet \oplus \bullet$ is still a dual order ideal in $P'$, so $P'$ is of double type.  Furthermore, again by Lemma \ref{lem:iterative counting},
\begin{equation*}
j(P') = j(Q) + j(\bullet \oplus \bullet) = 4j(P) + 3.
\end{equation*}

In each case, $P'$ is a poset of double type with $j(P') = 4 j(P)+r$, obtained by adding three elements to $P$.
\end{proof}

\begin{prop}\label{prop:doubles}
For all $k \ge 2$,
\begin{equation*}
m(k) \le (3/2) \lfloor \log_2 k \rfloor + 1.
\end{equation*}
\end{prop}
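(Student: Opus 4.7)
The plan is to prove a strengthened statement by induction on $k$: for every $k \ge 3$, there exists a poset $P$ of \emph{double type} with $j(P) = k$ and $|P| \le (3/2)\lfloor \log_2 k \rfloor + 1$. The desired bound on $m(k)$ follows immediately, and the trivial case $k = 2$ is handled separately by $m(2) = 1 \le 5/2$.

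The first step is to extend Lemma~\ref{lem:doubletype} to cover all four two-bit increments $r \in \{0, 1, 2, 3\}$: given a double-type poset $P$ with $j(P) = m$, produce a double-type poset $P'$ with $j(P') = 4m + r$, adding at most three elements. The cases $r \in \{2, 3\}$ are Lemma~\ref{lem:doubletype} itself. For $r = 0$, set $P' = P + \bullet + \bullet$; the dual order ideal $\bullet \oplus \bullet$ of $P$ remains one in $P'$, and $j(P') = 4m$ by Lemma~\ref{lem:operations}. For $r = 1$, set $P' = (P + \bullet + \bullet) \oplus \bullet$, with the two new disjoint elements $x_1, x_2$ and the new top $z$. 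Although the old dual ideal is no longer upward closed in $P'$, the pair $\{x_1, z\}$ is upward closed (the only upper cover of $x_1$ in $P'$ is $z$), so it forms a dual order ideal isomorphic to $\bullet \oplus \bullet$; Lemma~\ref{lem:operations} then yields $j(P') = 4m + 1$.

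The base cases $k \in \{3, 4, \dots, 11\}$ are verified by tabulation, exhibiting explicit double-type posets within the bound: for instance $\bullet \oplus \bullet$ for $k = 3$, the $3$-chain for $k = 4$, $(\bullet + \bullet) \oplus \bullet$ for $k = 5$, $(\bullet \oplus \bullet) + \bullet$ for $k = 6$, and analogous small constructions up through a $5$-element poset for $k = 11$. For the inductive step with $k \ge 12$, I would write $k = 4k' + r$ with $r \in \{0, 1, 2, 3\}$; then $k' \ge 3$ and $\lfloor \log_2 k \rfloor = \lfloor \log_2 k' \rfloor + 2$, so the induction hypothesis supplies a double-type $P'$ with $j(P') = k'$ and $|P'| \le (3/2)\lfloor \log_2 k' \rfloor + 1$. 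Applying the extended step to $P'$ produces a double-type $P$ with $j(P) = k$ and $|P| \le |P'| + 3 \le (3/2)\lfloor \log_2 k \rfloor + 1$.

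The principal technical hurdle is the $r = 1$ extension, where capping $P + \bullet + \bullet$ with a single top element destroys the original $\bullet \oplus \bullet$ dual ideal of $P$, so one must carefully identify a new one among the added elements. Once this is in place, the inductive step runs mechanically through the four residues modulo $4$, and the finite base-case list reduces to a short check of posets on at most five points.
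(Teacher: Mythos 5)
Your proposal is correct, and it rests on the same engine as the paper's proof---the double-type mechanism of Lemma~\ref{lem:doubletype}---but the bookkeeping is organized differently. The paper scans $k_{\textsf 2}$ left to right: it seeds the construction with $(s\cdot\bullet)\oplus\bullet$ at the second $\textsf{1}$-bit, absorbs each $\textsf{0}$-bit with a single disjoint point, invokes Lemma~\ref{lem:doubletype} only when a $\textsf{1}$-bit is met (so only for two-bit chunks $\textsf{10}$ and $\textsf{11}$), and special-cases a trailing $\textsf{1}$; the bound then comes from the estimate $\ell+1+\lceil(\ell-s)/2\rceil$. You instead run a top-down induction on $k$ via $k=4k'+r$, which forces you to extend the lemma to the residues $r=0$ (add $\bullet+\bullet$) and $r=1$ (add $(\,\cdot\,+\bullet+\bullet)\oplus\bullet$, with the new two-chain $\{x_1,z\}$ correctly identified as the dual order ideal restoring double type---the one genuinely new verification in your argument, and it is right), plus a finite table of base cases $3\le k\le 11$; your arithmetic $\lfloor\log_2(4k'+r)\rfloor=\lfloor\log_2 k'\rfloor+2$ is exact, so the inductive step closes with no parity or ceiling analysis. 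The trade-off: the paper's scan is more economical in elements when $k_{\textsf 2}$ has many $\textsf{0}$s (one point per zero bit, and no strengthened hypothesis to maintain for powers of $2$), while your version is cleaner to verify and packages the whole construction as a single uniform recursion at the cost of checking a handful of small posets by hand. Both yield exactly the bound $(3/2)\lfloor\log_2 k\rfloor+1$.
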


\begin{proof}
For a given integer $k \ge 2$, we construct a poset $P$ with $k$ open sets. As in the proof of Proposition~\ref{prop:singles}, let $k_{\textsf 2} = \epsilon_{\ell} \cdots \epsilon_1 \epsilon_0$ be the binary expansion of $k$. We will inductively construct posets $P_i$ for certain $i \in [0,\ell]$ with the property that $\big(j(P_i)\big)_{\textsf 2} = \epsilon_\ell \cdots \epsilon_{\ell-i}$. The process ends when $P_\ell$ is defined, and we take $P := P_\ell$.

If $\epsilon_\ell$ is the only bit equal to $\textsf{1}$, then set $P$ to be a poset consisting of $\ell$ disjoint elements. Otherwise, let $s$ be the smallest positive integer such that $\epsilon_{\ell-s} = \textsf{1}$. Let $P_s$ be the poset $(s \cdot \bullet) \oplus \bullet$, where $s \cdot Q$ denotes the direct sum $Q + \cdots + Q$ of $s$ copies of the poset $Q$.   Then $j(P_s) = 2^s+1$, which has binary expansion $\epsilon_\ell \cdots \epsilon_{\ell-s}$. Furthermore, the poset $P_s$ has $s+1$ elements and is of double type.

The remainder of the proof is inductive.  Assume that $P_i$ has been defined, is of double type, and that $j(P_i)$ has binary expansion $\epsilon_\ell \cdots \epsilon_{\ell-i}$. Consider the bit $\epsilon_{\ell - (i+1)}$. If $\epsilon_{\ell-(i+1)} = \textsf{0}$, then set $P_{i+1} = P_i + \bullet$. Otherwise, unless $\ell - (i+1) = 0$, the substring $\epsilon_{\ell - (i+1)}\epsilon_{\ell-(i+2)}$ is either $\textsf{11}$ or $\textsf{10}$. By Lemma \ref{lem:doubletype}, we can form a poset $P_{i+2}$ of double type such that $j(P_{i+2})$ has binary expansion $\epsilon_\ell \cdots \epsilon_{\ell-(i+2)}$, by adding three elements to $P_i$. If $\epsilon_{\ell - (i+1)} = \textsf{1}$ and $\ell - (i+1) = 0$, then set $P_\ell = (P_i + \bullet) \oplus \bullet$.

An example of the poset as constructed by this procedure for $k = 5550$ is depicted in Figure~\ref{fig:doubles-ex}.

To construct $P$, we first used $s+1$ elements to construct $P_s$, accounting for the leftmost $s+1$ bits in $k_{\textsf 2}$. After that, we either add one element to advance one bit, or add three elements to advance two bits, until the end where two elements may need to be added for the last bit.  Therefore
\begin{eqnarray*}
|P| &\le& (s + 1) + (\ell - s) + \lceil (\ell - s)/2 \rceil\\
    &=& \ell + 1 + \lceil (\ell - s)/2 \rceil \\ 
    &\le& \ell + 1 + \lceil (\ell - 1)/2 \rceil.
\end{eqnarray*}
\noindent Considering cases for the parity of $\ell - 1$, one sees that
\begin{equation*}
 \ell + 1 + \lceil (\ell - 1)/2 \rceil \le (3/2) \lfloor \log_2 k \rfloor + 1,
\end{equation*}
\noindent finishing the proof.
\end{proof}

\begin{cor}\label{cor:doubles}
For all $n \ge 1$,
\begin{equation*}
f(n) > 2^{2(n-1)/3}.
\end{equation*}
\noindent That is, $T(n,k) > 0$ for all $k \in [2,2^{2(n-1)/3}]$.
\end{cor}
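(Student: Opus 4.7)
The plan is to deduce this corollary directly from Proposition~\ref{prop:doubles} together with the relationship between $f(n)$ and $m(k)$ recorded in Remark~\ref{rem:explainT0}, namely that $f(n)$ is the smallest integer with $m\bigl(f(n)\bigr) > n$. Equivalently, it suffices to show that $m(k) \le n$ for every integer $k$ with $2 \le k \le 2^{2(n-1)/3}$, since this will force $f(n)$ to exceed $2^{2(n-1)/3}$.

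First I would fix such a $k$ and take logarithms. From $k \le 2^{2(n-1)/3}$ I obtain $\log_2 k \le 2(n-1)/3$, and therefore $\lfloor \log_2 k \rfloor \le 2(n-1)/3$ as well. Plugging this into the bound of Proposition~\ref{prop:doubles} gives
\begin{equation*}
m(k) \;\le\; \tfrac{3}{2}\lfloor \log_2 k \rfloor + 1 \;\le\; \tfrac{3}{2}\cdot\tfrac{2(n-1)}{3} + 1 \;=\; n.
\end{equation*}
Thus there exists a topology with $k$ open sets on at most $n$ points; by part~(b) of the lemma preceding Remark~\ref{rem:explainT0}, this topology can be extended to one on exactly $n$ points, giving $T(n,k) > 0$.

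Since this holds for every $k$ in the range $[2, 2^{2(n-1)/3}]$, we conclude $f(n) > 2^{2(n-1)/3}$, which is precisely the statement of the corollary. The argument is essentially a one-line arithmetic manipulation once Proposition~\ref{prop:doubles} is in hand, so there is no real obstacle to overcome here; the only thing to be careful about is the correct use of the floor function and the equivalence between the statement about $f(n)$ and the corresponding statement about $m(k)$.
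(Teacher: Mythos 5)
Your deduction is correct and is exactly the argument the paper intends (the corollary is stated without proof as an immediate consequence of Proposition~\ref{prop:doubles}): from $k \le 2^{2(n-1)/3}$ one gets $m(k) \le \tfrac{3}{2}\lfloor\log_2 k\rfloor + 1 \le n$, and padding via part~(b) of the lemma gives $T(n,k) > 0$, hence $f(n) > 2^{2(n-1)/3}$. No issues.
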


\begin{figure}[htbp]
\centering
\epsfig{file=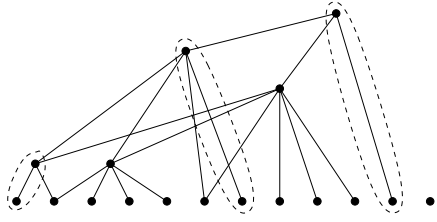,scale=.5}
\caption{The method of Proposition~\ref{prop:doubles} applied to $k = 5550$, where $k_{\textsf 2} = \textsf{1010110101110}$.  The dual order ideals isomorphic to $\bullet \oplus \bullet$ which are defined by the procedure are circled.}\label{fig:doubles-ex}
\end{figure}

Note that $1.5 \lfloor \log_2 5550 \rfloor + 1$ is greater than the number elements in the poset in Figure~\ref{fig:doubles-ex}, but, again, this should not be surprising given the number of $\textsf{0}$s in $5550_{\textsf 2}$.

The bound obtained in Proposition~\ref{prop:doubles} by considering pairs of consecutive bits in $k_{\textsf 2}$ is better than the function obtained in Proposition~\ref{prop:singles}.  In fact this bound can be improved still further by considering triples of consecutive bits in $k_{\textsf 2}$, as shown below, although this is significantly more complicated than the previous methods.  As discussed at the end of the section, there is no analogous method for considering quadruples of consecutive bits in $k_{\textsf 2}$.

\begin{defn}\label{def:tripletype}
A poset is of \emph{triple type} if it contains a dual order ideal isomorphic to one of the following posets, named as indicated.
\begin{equation*} 
\text{Type 1: \ }  \parbox{.75cm}{\epsfig{file=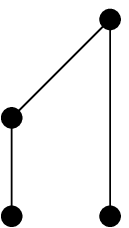, scale=.5}} \hspace{.5in} \text{Type 2: \ } \parbox{1.2cm}{\epsfig{file=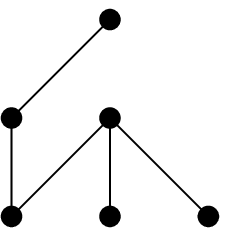, scale=.5}} \hspace{.5in} \text{Type 3: \ } \parbox{1.7cm}{\epsfig{file=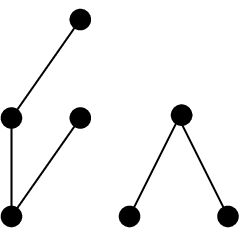, scale=.5}}
\end{equation*} 
\end{defn}

The motivation for this definition is similar to that for double type.  If $P$ is isomorphic to a poset of Type 1, 2, or 3, and $Q$ is the poset obtained by adding three disjoint points to $P$, then for each $r \in \{4,5,6,7\}$, there is a dual order ideal $I$ in $Q$ with $j(I) = r$.

\begin{lem}\label{lem:tripletype}
Given a poset $P$ of triple type, and an integer $r \in \{4,5,6,7 \} $, there is a poset $P'$ of triple type and with $j(P') = 8 j(P)+r$, formed by adding four elements to the poset $P$. 
\end{lem}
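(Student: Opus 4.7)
The plan is to mimic the proof of Lemma \ref{lem:doubletype}, except with three disjoint additions before the final maximal element. Let $D \subseteq P$ be the dual order ideal of $P$ isomorphic to one of the three types specified in Definition \ref{def:tripletype}. First, I would form $Q = P + \{x_1\} + \{x_2\} + \{x_3\}$ by adjoining three disjoint new points to $P$; three applications of Corollary \ref{cor:x2+1} then give $j(Q) = 8 j(P)$.

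For each target $r \in \{4, 5, 6, 7\}$, I would select a dual order ideal $I_r \subseteq Q$ with $j(I_r) = r$ and form $P'$ by adjoining a new maximal element $y$ covering exactly the set $Q \setminus I_r$ (an order ideal of $Q$, since $I_r$ is closed upward). Lemma \ref{lem:iterative counting} applied at $y$ then yields
\[
j(P') \;=\; j(P' \setminus y) + j((P')_y) \;=\; j(Q) + j(I_r) \;=\; 8 j(P) + r,
\]
so $P'$ has the desired number of order ideals and uses exactly four elements beyond $P$.

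To produce the $I_r$'s, observe that because $x_1, x_2, x_3$ are disjoint from everything else, every dual order ideal $I$ of $Q$ decomposes uniquely as $I = I_P \sqcup S$ with $I_P$ a dual order ideal of $P$ and $S \subseteq \{x_1, x_2, x_3\}$, and correspondingly $j(I) = j(I_P) \cdot 2^{|S|}$ by Lemma \ref{lem:operations}. The value $r = 4$ is then immediate via $I_P = \emptyset$ and $S = \{x_1, x_2\}$; the remaining cases reduce to requiring that $D$ contain dual order ideals with $j$-values $5$, $7$, and at least one of $\{3, 6\}$ (for $r = 6$ one may use either a $j = 6$ ideal with $S = \emptyset$ or a $j = 3$ ideal together with a single $x_i$). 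I would verify this by direct inspection of each of the three listed types, which are presumably defined precisely so that their lattices of dual order ideals realize these $j$-values.

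The final, and most delicate, step is checking that $P'$ is itself of triple type. Adding $y$ may destroy $D$ as a dual order ideal of $P'$ whenever some element of $D$ lies in the down-set of $y$, so I would need to exhibit a new dual order ideal of $P'$ isomorphic to one of Types 1, 2, or 3, typically built out of $y$ together with one or more of the $x_i$'s and possibly parts of $D$. The main obstacle is this bookkeeping: one must split on the original type of $D$ and on the chosen value of $r$, and in each of these cases show that at least one of the three configurations reappears in $P'$. The three types in Definition \ref{def:tripletype} are presumably chosen precisely so that the family is closed under the construction, making this final verification possible.
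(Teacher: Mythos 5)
Your framework is exactly the paper's: form $Q = P + \{x_1\} + \{x_2\} + \{x_3\}$ so that $j(Q) = 8j(P)$, then attach one maximal element $y$ whose set of incomparable elements is a dual order ideal of $Q$ with $j$-value $r$, and apply Lemma~\ref{lem:iterative counting} at $y$. Your decomposition of the dual order ideals of $Q$ lying over $J = I + \{x_1\}+\{x_2\}+\{x_3\}$ as $I_P \sqcup S$ with $j = j(I_P)\cdot 2^{|S|}$, and the resulting reduction to finding dual order ideals of the triple-type witness with $j$-values $5$, $7$, and $3$ or $6$, is a correct and slightly more explicit account of what the paper's twelve figures encode. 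So the counting half of the argument is sound and matches the paper.

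The gap is that the two case verifications you label as ``presumably'' true are not side conditions -- they are the actual content of the lemma, and they are what the paper's proof consists of (a twelve-case check, one figure per pair: type of $I$ and value of $r$, with the new triple-type witness circled in each). First, the achievability of $r = 5$ and $r = 7$ (and of $6$) is not automatic: it requires inspecting each of Types 1--3 and exhibiting dual order ideals with the needed $j$-values, which you do not do. Second, and more seriously, the conclusion that $P'$ is again of triple type is left entirely unproved; without it the lemma cannot be iterated in Theorem~\ref{thm:triples}, so the statement itself is not established. Note also that this second requirement constrains the first: you may not choose $I_r$ to be an arbitrary dual order ideal with $j(I_r) = r$, because different choices leave different structure incomparable to $y$, and only suitable choices leave a Type 1, 2, or 3 dual order ideal intact in $P'$ (for instance, with your $r=4$ choice $I_r = \{x_1, x_2\}$, the old witness $I$ is swallowed entirely below $y$, and one must locate a new witness built from $y$, $x_3$, and a $2$-chain inside $I$ -- a verification that depends on the precise shapes in Definition~\ref{def:tripletype}). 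So the proposal correctly sets up the skeleton but omits the case analysis that makes the lemma true.
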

\begin{proof}
Add three elements to $P$ to form the poset $Q = P + \{x_1\} + \{x_2\} + \{x_3\}$.  By Corollary \ref{cor:x2+1}, we have $j(Q) = 8 j(P)$. 

Let $I$ be a dual order ideal in $P$ that is isomorphic to one of the posets illustrated in Definition \ref{def:tripletype}, and let $J$ be the dual order ideal $I + \{x_1\} + \{x_2\} + \{x_3\}$ in $Q$. To complete the proof, we will form a new poset $P'$ by adding a maximal element $y$ to $Q$ such that the following three conditions are satisfied
\begin{itemize}
 \item $P'$ is of triple type,
 \item $y > x$ for all $x \in Q \setminus J$,
 \item $j(J_y) = r$, with notation as in Definition~\ref{defn:subposets}.
\end{itemize}
Combined the second and third condition imply that $P'_y = J_y$, and by Lemma \ref{lem:iterative counting} we have
\begin{equation*}
j(P') = j(P' \setminus y) + j(P'_y) = j(Q) + j(J_y) = 8 j(P) + r,
\end{equation*}
\noindent as desired.

There are twelve cases to consider for adding the element $y$, depending on the type of $I$ and the value of $r$. The figures below show how to place $y$ in relation to the dual order ideal $J$ in each case. As $y > x$ for all $x \in Q \setminus J$, this shows how to add $y$ to $Q$. In each case the dual order ideal $I$ is drawn with solid lines, and the dual order ideal making $P'$ of triple type is circled.  Of the four new elements in each figure, the maximal of these is $y$.  The first figure in each row corresponds to the case $r = j(J_y) = 4$, the second to the case $r = j(J_y) = 5$, the third to $r = j(J_y) = 6$, and the fourth to $r = j(J_y) = 7$.

\begin{equation*}\tag{Type 1}
\epsfig{file=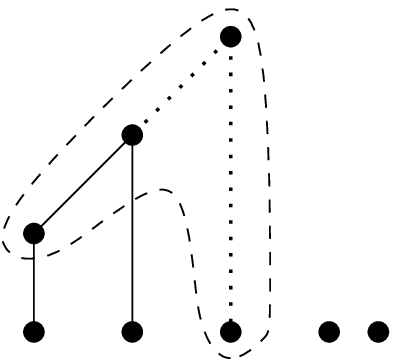,scale=.5} \hspace{.35in} \epsfig{file=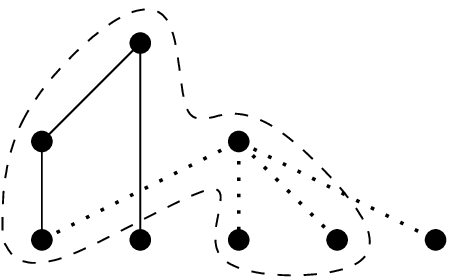,scale=.5} \hspace{.35in}
\epsfig{file=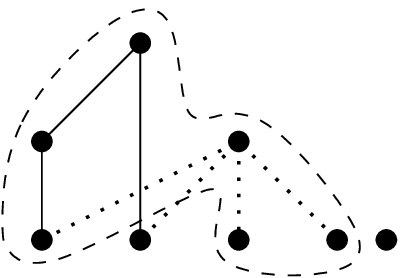,scale=.5} \hspace{.35in} \epsfig{file=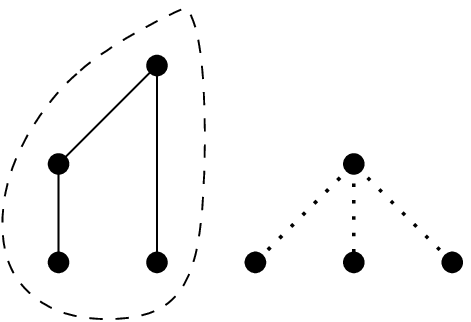,scale=.5}
\end{equation*}
\begin{equation*}\tag{Type 2}
\epsfig{file=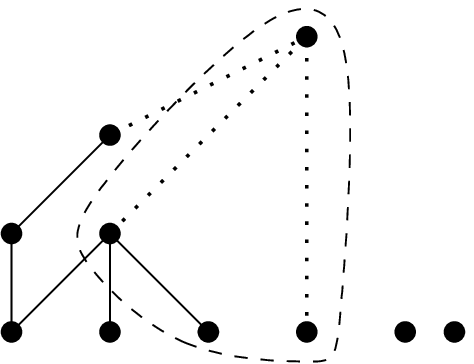,scale=.5} \hspace{.25in} \epsfig{file=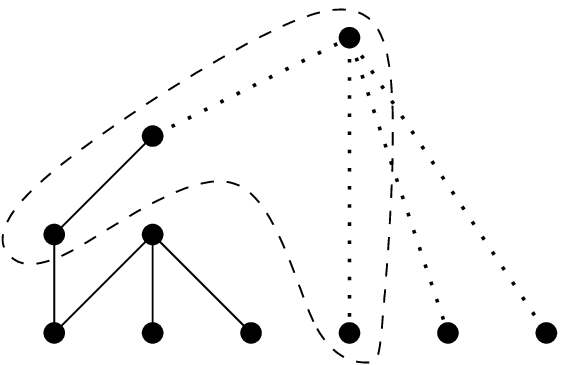,scale=.5} \hspace{.25in}
\epsfig{file=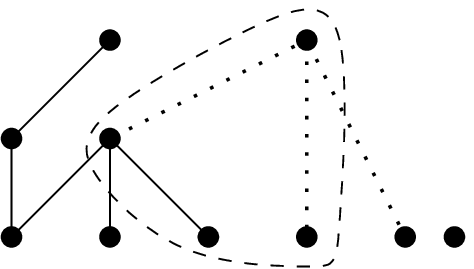,scale=.5} \hspace{.25in} \epsfig{file=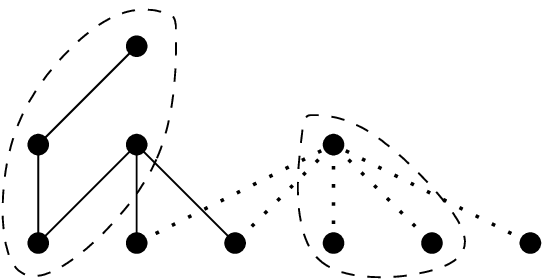,scale=.5}
\end{equation*}
\begin{equation*}\tag{Type 3}
\epsfig{file=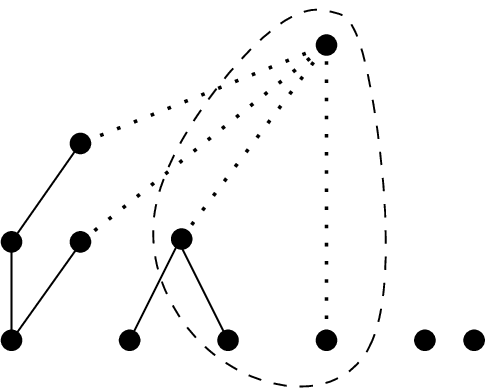,scale=.5} \hspace{.25in} \epsfig{file=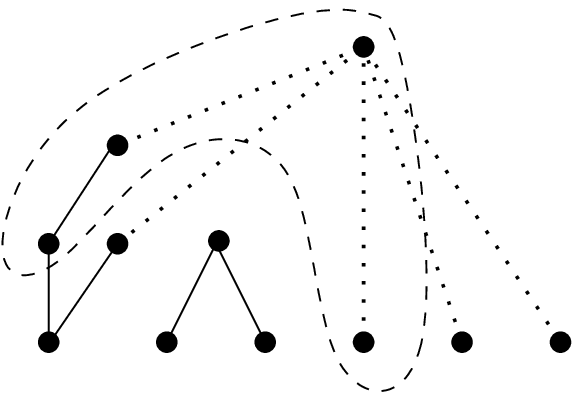,scale=.5} \hspace{.25in}
\epsfig{file=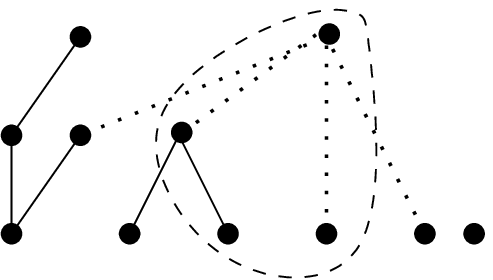,scale=.5} \hspace{.25in} \epsfig{file=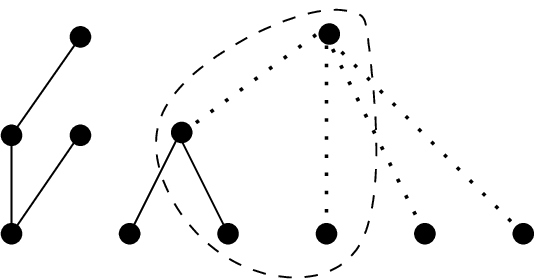,scale=.5}
\end{equation*}
\end{proof}

\begin{thm}\label{thm:triples}
For all $k \ge 2$,
\begin{equation*}
m(k) \le (4/3) \lfloor \log_2 k \rfloor + 2.
\end{equation*}
\end{thm}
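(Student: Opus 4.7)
My plan is to imitate the proof of Proposition~\ref{prop:doubles}, upgrading from pairs to triples of bits by means of Lemma~\ref{lem:tripletype}. The rate then improves from three elements per two bits to four elements per three bits, which is where the coefficient $4/3$ comes from. Given $k \ge 2$ with binary expansion $k_\textsf{2} = \epsilon_\ell \cdots \epsilon_0$, I would inductively build posets $P_i$ of triple type with $\bigl(j(P_i)\bigr)_\textsf{2} = \epsilon_\ell \cdots \epsilon_{\ell-i}$, and set $P := P_\ell$.

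For the seeding stage, I would scan the leading bits using the singles construction of Proposition~\ref{prop:singles} until the partial poset first contains a dual order ideal of one of the three shapes in Definition~\ref{def:tripletype}. Concretely, after reading the leading $\textsf{1}$, the block of subsequent $\textsf{0}$'s (handled by direct sums with $\bullet$), and at most two further $\textsf{1}$ bits (handled by ordinal sums with $\bullet$), the resulting poset $P_s$ is of the form $(s_1 \cdot \bullet) \oplus \bullet \oplus \cdots$ and one checks directly that it contains one of the prescribed Type~1, 2, or 3 dual order ideals. This seeding uses $s + O(1)$ elements to account for an $s+1$-bit prefix.

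For the iteration stage, assume $P_i$ is of triple type with $j$-value matching $\epsilon_\ell \cdots \epsilon_{\ell-i}$. If $\epsilon_{\ell-(i+1)} = \textsf{0}$, set $P_{i+1} = P_i + \bullet$: this adds one element, preserves triple type (the distinguished dual order ideal of $P_i$ survives in $P_{i+1}$), and doubles $j$ by Corollary~\ref{cor:x2+1}. If $\epsilon_{\ell-(i+1)} = \textsf{1}$ and at least two further bits remain, apply Lemma~\ref{lem:tripletype} with
\[
r = 4\epsilon_{\ell-(i+1)} + 2\epsilon_{\ell-(i+2)} + \epsilon_{\ell-(i+3)} \in \{4,5,6,7\}
\]
to obtain $P_{i+3}$ of triple type using four new elements. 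If a terminal $\textsf{1}$ arises with fewer than two further bits remaining, fall back on the singles-method appendings $\oplus\bullet$ and $+\bullet$ (or, equivalently, on the doubles construction) to finish, using at most three extra elements.

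The count then mirrors the one in Proposition~\ref{prop:doubles}. Letting $z$ denote the number of $\textsf{0}$'s consumed in the iteration stage and $t$ the number of applications of Lemma~\ref{lem:tripletype}, we have $z + 3t \le \ell - s$, and
\[
|P| \le (s+1) + z + 4t + O(1) \le \ell + 1 + \bigl\lceil (\ell - s)/3 \bigr\rceil + O(1),
\]
which, after checking cases on $\ell \bmod 3$ and on the first few bits, yields $|P| \le (4/3)\lfloor \log_2 k \rfloor + 2$. The main obstacle, as in Proposition~\ref{prop:doubles}, will be the seeding and cleanup: one must check that the initial segment really does produce a triple-type poset via the singles method, and that the constant overhead absorbed into the additive $+2$ is enough in every case, including the short strings where $\ell$ is small and the direct verification against Table~\ref{table:min n for k} is needed.
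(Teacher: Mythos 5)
Your proposal is correct and takes essentially the same route as the paper's own (outlined) proof: seed with the Proposition~\ref{prop:singles} construction through the third nonzero bit (the paper fixes $s$ there, notes the seed has exactly $s+2$ elements and contains a Type~1 dual order ideal), then add one disjoint point per $\textsf{0}$ and apply Lemma~\ref{lem:tripletype} (four elements per three bits) at each $\textsf{1}$, finishing the last $i<3$ bits with $i+1$ points via Corollary~\ref{cor:x2+1} or Lemma~\ref{lem:doubletype}, and count as in Proposition~\ref{prop:doubles} to get $\ell + 2 + \lceil(\ell-s)/3\rceil \le (4/3)\ell + 2$. The only differences are presentational (your $O(1)$ slack versus the paper's exact seed cost and the bound $s \ge 2$; the few-ones case is handled directly by Proposition~\ref{prop:singles} with at most $\ell+1$ elements, so no table check is actually needed).
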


\begin{proof}
The approach is similar to the proof of Proposition \ref{prop:doubles}, and we only outline the construction.  Let $k \geq 2$ be a fixed integer, and let $k_{\textsf 2} = \epsilon_\ell\cdots\epsilon_0$ be the binary expansion of $k$. We construct a poset $P$ with $j(P)=k$.  If $k$ has fewer than three bits equal to $\textsf{1}$ in its binary expansion, use the construction in Proposition \ref{prop:singles} to obtain a poset with no more than $\ell +1$ elements. Otherwise, let $s$ be such that $\epsilon_{\ell-s}$ is the third nonzero bit from the left in $k_\textsf{2}$. Using the construction in Proposition \ref{prop:singles} we obtain a poset $P_s$ with $s +2$ elements such that $j(P_s)$ has binary expansion $\epsilon_\ell \cdots \epsilon_{\ell-s}$. Observe that $P_s$ is of triple type as it contains a dual order ideal isomorphic to Type 1 in Definition \ref{def:tripletype}.

As in the proof of Proposition \ref{prop:doubles}, we now move rightward in the binary expansion of $k$. If we encounter the bit $\textsf{0}$, we add a single disjoint point to our poset and move on. If we encounter the bit $\textsf{1}$, we consider this bit and the two immediately following it. They form one of the subsequences $\textsf{100}, \textsf{101}, \textsf{110}$ or $\textsf{111}$.  In each case the corresponding integer belongs to the set $\{4,5,6,7\}$, and we can apply Lemma \ref{lem:tripletype} to obtain a new poset of triple type incorporating the three bits under scrutiny, by adding four elements. Finally, when there are $i < 3$ bits left we can incorporate them into the poset by adding $i + 1$ points, using Corollary \ref{cor:x2+1} if $i=1$ and Lemma \ref{lem:doubletype} if $i=2$.

A counting argument similar to the one in Proposition \ref{prop:doubles} shows that
\begin{eqnarray*}
|P| & \leq & (s+2) + (\ell - s) + \lceil (\ell - s)/3 \rceil \\ 
    & = & \ell + 2 + \lceil (\ell - s)/3 \rceil \\
    & \leq & \ell + 2 + \lceil (\ell -2)/3 \rceil.
\end{eqnarray*}
\noindent Examination of cases based on the remainder $\ell$ mod 3 gives that 
\begin{equation*}
  \ell + 2 + \lceil (\ell -2)/3 \rceil \le (4/3) \lfloor \log_2 k \rfloor + 2,
\end{equation*}
\noindent finishing the proof.
\end{proof}

\begin{cor}\label{cor:triples}
For all $n \ge 1$,
\begin{equation*}
f(n) > 2^{3(n-2)/4}.
\end{equation*}
\noindent That is, $T(n,k) > 0$ for all $k \in [2,2^{3(n-2)/4}]$.
\end{cor}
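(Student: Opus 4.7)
The plan is to derive this corollary as a direct numerical consequence of Theorem \ref{thm:triples}, using the bridge between $m$ and $f$ recorded in Remark \ref{rem:explainT0} and in part (b) of the lemma on the sequences $m(k)$ and $f(n)$. The essential content is already in hand; what remains is an inequality manipulation.

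First I would fix $n \ge 1$ and an integer $k$ with $2 \le k \le 2^{3(n-2)/4}$, and argue that $T(n,k) > 0$. Taking $\log_2$ of the upper bound on $k$ gives $\log_2 k \le 3(n-2)/4$, hence
\begin{equation*}
\lfloor \log_2 k \rfloor \;\le\; \tfrac{3(n-2)}{4}.
\end{equation*}
Plugging this into Theorem \ref{thm:triples} yields
\begin{equation*}
m(k) \;\le\; \tfrac{4}{3}\lfloor \log_2 k \rfloor + 2 \;\le\; \tfrac{4}{3} \cdot \tfrac{3(n-2)}{4} + 2 \;=\; n.
\end{equation*}
Thus there exists a topology on some $n' \le n$ points with exactly $k$ open sets. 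If $n' < n$, part (b) of the lemma relating $m$ and $f$ lets us enlarge it to a topology on $n$ points of the same size, so $T(n,k) > 0$ in every case.

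Since $T(n,k) > 0$ for all $k \in [2, 2^{3(n-2)/4}]$, and $f(n)$ is by definition the smallest integer with $T(n, f(n)) = 0$, we conclude $f(n) > 2^{3(n-2)/4}$, as claimed. There is no real obstacle here: the work has been done in Theorem \ref{thm:triples}, and only the bookkeeping of passing from the logarithmic bound on $m(k)$ to the exponential bound on $f(n)$ remains. The only small subtlety worth noting is that the floor in $\lfloor \log_2 k \rfloor$ is handled automatically because $3(n-2)/4$ need not be an integer but the inequality $\lfloor x \rfloor \le x$ is used with $x = \log_2 k$, not with $x = 3(n-2)/4$.
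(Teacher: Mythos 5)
Your proposal is correct and matches the paper's own (implicit) derivation: the corollary is obtained from Theorem~\ref{thm:triples} exactly by noting $\lfloor \log_2 k \rfloor \le 3(n-2)/4$ gives $m(k) \le n$, then using part~(b) of the lemma relating $m$ and $f$ (cf.\ Remark~\ref{rem:explainT0}) to conclude $T(n,k)>0$ and hence $f(n) > 2^{3(n-2)/4}$. No gaps; the bookkeeping is exactly as the paper intends.
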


The successive results in Propositions~\ref{prop:singles} and~\ref{prop:doubles} and Theorem~\ref{thm:triples} suggest that even better bounds might be obtained by adapting the constructions to consider four bits of $k_{\textsf 2}$ at a time, for any $k \ge 2$.  However, our current approach does not translate directly into an approach for quadruples of digits.  More precisely, we cannot add four disjoint points to the poset, and a single maximal element, and maintain the existence of a collection of dual order ideals having $8$, $9$, $10$, $11$, $12$, $13$, $14$, and $15$ order ideals, respectively.  We do not rule out the possibility that another technique might be employed to improve the result of Theorem~\ref{thm:triples}, but leave that as a question for future research.

\section{Specified minimal set sizes} \label{sec:specified}

The results in the previous section can be generalized by looking at topologies where the minimal open sets $\{U_x\}$ have specified sizes.  An extremal case of this, related to cardinalities of distributive lattices with a specified number of join-irreducibles of each rank, is treated in \cite{stanley-extremal}.  Additionally, unlabeled distributive lattices with fewer than $50$ elements and an arbitrary given number of irreducible elements are studied in \cite{erneheitzigreinhold}.  One version of this generalization is very easy to handle by modifying the construction described in Theorem~\ref{thm:triples} to produce topologies with specified minimal set sizes.

\begin{defn}
Let $T_m(n,k)$ be the number of topologies on $n$ points having $k$ open sets, where the smallest neighborhood of each point has at least $m$ elements.
\end{defn}

\begin{prop}
$T_m(n,k) > 0$ for all $n \ge m$, $m \ge 1$, and $k \in [2,2^{\frac{3(n-2)}{3m+1}}].$
\end{prop}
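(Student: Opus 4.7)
The plan is to modify the construction of Theorem~\ref{thm:triples} by enlarging each minimal element into a cluster of $m$ topologically indistinguishable points. Given $k \ge 2$ and $m \ge 1$, first apply Theorem~\ref{thm:triples} to obtain a $T_0$ topology corresponding to a poset $P$ with $j(P)=k$ and $|P| \le (4/3) \lfloor \log_2 k\rfloor + 2$. Set $\ell := \lfloor \log_2 k\rfloor$. A careful bookkeeping in the proofs of Proposition~\ref{prop:singles}, Lemma~\ref{lem:doubletype}, and Lemma~\ref{lem:tripletype} shows that $P$ has at most $\ell$ minimal (``base'') elements --- one added via $+\bullet$ per bit of $k_{\textsf 2}$ after the leading $\textsf{1}$ --- together with at most $\ell/3 + 2$ non-minimal (``top'') elements contributed by $\oplus\bullet$ and by the placements in Lemmas~\ref{lem:doubletype} and~\ref{lem:tripletype}. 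Moreover, each such top element $y$ is inserted above all but at most seven of the elements present at the time, and in particular above at least one base element of $P$.

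Now form a preorder $P'$ by replacing each minimal element $x$ of $P$ with $m$ mutually equivalent copies $x^{(1)},\ldots,x^{(m)}$, while preserving all order relations with the top elements. The corresponding (non-$T_0$) topology $\mathcal{T}'$ has the same open sets as the topology attached to $P$, up to the canonical quotient described after Lemma~\ref{lem:Bijection}; hence $|\mathcal{T}'| = j(P) = k$. The minimal neighborhood of each copy $x^{(i)}$ equals the equivalence class $\{x^{(1)},\ldots,x^{(m)}\}$, and so has exactly $m$ elements. The minimal neighborhood of a top element $y$ contains $y$ together with the blow-ups of all minimal elements of $P$ below $y$, and therefore has size at least $m\cdot|\{x\in P : x<y,\ x\text{ minimal}\}|+1 \ge m+1$, using that every top element covers at least one base.

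The total number of points is at most
\begin{equation*}
m\ell + \bigl(\ell/3 + 2\bigr) \;=\; \frac{(3m+1)\ell}{3} + 2.
\end{equation*}
The hypothesis $k \le 2^{3(n-2)/(3m+1)}$ gives $\ell \le 3(n-2)/(3m+1)$, so this count is at most $n$. To land on exactly $n$ points, append further equivalent copies to any single minimal equivalence class; this leaves the number of open sets equal to $k$ and can only enlarge already-satisfied minimal neighborhoods.

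The principal obstacle is verifying the two structural claims about the Theorem~\ref{thm:triples} construction: that the base/top split behaves as asserted, and that every top element covers at least one base. The first is a direct but tedious unwinding of the constructions used there; the second is automatic for additions of the form $\oplus\bullet$ (used in Proposition~\ref{prop:singles}) but requires case-by-case inspection of the twelve placements in the proof of Lemma~\ref{lem:tripletype}. For the handful of very small values of $k$ or $\ell$ in which the interval $[2,2^{3(n-2)/(3m+1)}]$ is non-empty but a top might fail to cover a base, one can fall back on the trivial topology $\{\emptyset,X\}$ on $n$ points --- which has $k=2$ open sets and satisfies $|U_x|=n\ge m$ for every $x$ --- or verify the few remaining instances by direct inspection.
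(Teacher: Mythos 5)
Your proposal is correct and takes essentially the same route as the paper: the paper likewise replaces each of the $\ell$ minimal elements of the Theorem~\ref{thm:triples} poset by a set of at least $m$ indistinguishable points and counts $m\ell + 2 + \lceil (\ell-2)/3 \rceil \le (m+1/3)\ell + 2 \le n$ points. The structural check you flag as the main obstacle is actually automatic: the $\ell$ bit-elements are precisely the minimal elements of the constructed poset (every added top element lies above some existing element, e.g.\ because $j(P'_y)\le 7 < j(Q)$ in Lemma~\ref{lem:tripletype}), and in a finite poset every non-minimal point lies above some minimal one, so its smallest neighborhood contains a blown-up class and no case-by-case inspection of the twelve placements is needed.
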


\begin{proof}
In a topology $\mathcal{T}$, the smallest neighborhood of a point $x$ is the set $U_x$. The sets $U_x$ with fewest elements are those where $x$ is minimal in the preorder  $P(\mathcal{T})$.  

In the procedure described in the proof of Theorem~\ref{thm:triples}, the minimal elements of the poset form an antichain of size $\ell$, corresponding to each bit after $\epsilon_{\ell}$ in the step-by-step reading of $k_{\textsf 2}$.  Therefore, requiring the smallest neighborhood of each point in $\mathcal{T}$ to contain at least $m$ points simply means replacing each of these $\ell$ elements by a set of cardinality at least $m$.  Thus, to make such a topology with $k$ open sets, a similar argument to that in the proof of the theorem shows that one needs at most $m\ell + 2 + \lceil (\ell - 2)/3 \rceil$ elements.  As in the proof of the theorem,
\[ m\ell + 2 + \lceil (\ell - 2)/3 \rceil \le (m + 1/3) \ell + 2,\]
and the result follows.
\end{proof}

\section{Better efficiency}\label{sec:better efficiency}

The main result of this paper, Theorem~\ref{thm:triples}, gives a procedure to construct a topology having $k$ open sets, needing one extra point in the topology for each triple of bits after the first three $\textsf{1}$s in the binary expansion $k_{\textsf 2}$.  There may be some situations where this procedure requires fewer points than the bounds suggest, and we highlight a few of these here.  

First of all, if the binary expression $k_{\textsf 2}$ includes many $\textsf{0}$s, then there may be large portions of this expression that get skipped over by the procedure, and thus fewer triples contribute an element to the poset.

Another way to increase the efficiency of this type of procedure would be to note patterns of consecutive digits in the string $k_{\textsf 2}$.  For example, suppose that $k = 2^{2^r} - 1$.  Thus $\ell = 2^r - 1$ and the binary string $k_{\textsf 2}$ consists of $2^r $ repeated  $\textsf{1}$s.  Then one can parse the string $k_{\textsf 2}$ as
\begin{equation*}
\textsf{1} \mid \textsf{1} \mid \textsf{11} \mid \textsf{1111} \mid \textsf{11111111} \mid \ldots,
\end{equation*}
\noindent where each section is identical to the union of all sections to the left.  Thus a new section with $2^s$ $\textsf{1}$s can be handled by finding a dual order ideal in the poset with $2^{2^s}-1$ antichains, similarly to the procedure in the proof of Theorem~\ref{thm:triples}.  An example of this for $2^{2^4} - 1$ is depicted in Figure~\ref{fig:65535}.

\begin{figure}[htbp]
\epsfig{file=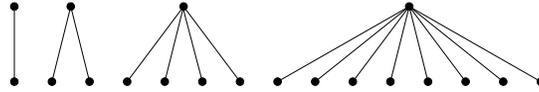,scale=.5}
\caption{An efficient way to draw a poset with $65535$ antichains, using 19 elements.}\label{fig:65535}
\end{figure}

As suggested by Figure~\ref{fig:65535} and Lemma~\ref{lem:operations}, if the number of open sets desired factors conveniently well, this may also reduce the number of points needed in the topology.

Fix positive integers $a$ and $b$.  If the desired number of open sets is
\begin{equation*}
k = 1 + 2^a + 2^{2a} + \cdots + 2^{ba},
\end{equation*}
\noindent then the procedure in Proposition~\ref{prop:singles} gives a poset having $(a+1)b$ elements and $k$ antichains.  Figure~\ref{fig:powers of 2} depicts such a poset when $a=3$ and $b=4$ (that is, $k = 4681$).

\begin{figure}[htbp]
\epsfig{file=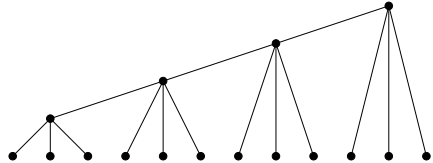,scale=.5}

\caption{The procedure in Proposition~\ref{prop:singles} applied to $k = 4681$.}\label{fig:powers of 2}
\end{figure}

Now consider an integer of the form $k = x(1 + 2^a + 2^{2a} + \cdots + 2^{ba})$, where $\ell (x) + 1 \le a$. The binary expansion of $k$ consists of $b+1$ repeated instances of the binary expansion of $x$.  In this situation, due to Lemma~\ref{lem:operations}, there exists a poset having $k$ antichains and at most
\begin{equation*}
(a+1)b + (4/3)\lfloor \log_2 x \rfloor + 2
\end{equation*}
\noindent elements. Thus, integers $k$ with repeated patterns in their binary expansion can be handled very efficiently.

\section{Comparison to other sequences} \label{sec:comparison}

Using Stembridge's MAPLE program \cite{posets}, we have calculated the initial values of the sequence $\{m(k)\}$, and these have been entered into \cite{oeis} as entry A137813.  The terms of $m(k)$ are very similar to sequence A003313 of \cite{oeis}, giving the length of a shortest addition chain for an integer, and the constructions in the previous section are in fact similar to those for producing short addition chains and star chains \cite{knuth}.

\begin{defn}
An \emph{addition chain} for $k$ is sequence of integers $x_0, x_1, \cdots, x_n$ such that $x_0 = 1$, $x_n = k$, and each term in the sequence is the sum of two (not necessarily distinct) numbers appearing earlier in the sequence. The \emph{length} of the addition chain $x_0, x_1, \cdots, x_n$ is $n$.
\end{defn}
 
For more information, both historical and mathematical, about addition chains, see \cite{knuth}.
Sequence A003313 of \cite{oeis} is defined as follows. 

\begin{defn}
For a positive integer $k$, let $a(k)$ be the length of the shortest possible addition chain for $k$.
\end{defn}

Interestingly, the sequences $a(k)$ and $m(k)$ agree in their first 100 terms, except for $k = 71$, where $m(71) = 8$, while $a(71) = 9$. It is tempting to wonder whether $a(k)$ is an upper bound for $m(k)$.  Examples suggest that ``short'' addition chains can be realized by posets, but this does not seem to be true for ``long'' addition chains. The division between ``short'' and ``long'' chains is unclear, but seems to lie above the range of values for which it is currently feasible to calculate $m(k)$. The relationship between these sequences is intriguing, and has previously been studied by Vollert in \cite{vollert}.

A concrete relationship between the sequences $m(k)$ and $a(k)$ is a common upper bound.

\begin{defn}
For a positive integer $k$, let $b(k)$ be the length of the shortest possible addition chain for $k$ obtained by using only the methods of factoring and binary expansion.
\end{defn}

This is sequence A117498 in \cite{oeis}. By definition, $b(k)$ is an upper bound for $a(k)$. Also from the definition it follows that $b(k)$ satisfies the inductive equation
\begin{equation*}
b(k) = \begin{cases}
1, & \text{if}~ k = 2;\\
\min\left\{1 + b(k-1), \min\limits_{\substack{1 < d < k \\ d \mid k}} \{b(d) + b(k/d)\}\right\}, & \text{if}~ k > 2.
\end{cases}
\end{equation*}
It follows from Corollary~\ref{cor:binary/factor bound} that $b(k)$ is an upper bound for $m(k)$. The first term where the sequences differ is $k = 23$, where $b(23) = 7$, while $a(23) = m(23) = 6$.

The sequence $f(n)$ has also been entered into \cite{oeis}, as sequence number A137814. 
The initial terms of this sequence are 3, 5, 7, 11, 19, 29, 47, 79, 127, and 191. That these are all prime numbers is not surprising: for a composite number $k$, Lemma \ref{lem:operations} implies that one can efficiently construct a poset with $k$ order ideals as a direct sum of two posets.  Given the relationship between $m(k)$ and $a(k)$ above, it is expected that $\{f(n)\}$ be similar to sequence A003064 of \cite{oeis}, giving the smallest number with addition chains of length $n$.

\section{Acknowledgments}

We are grateful to two thoughtful referees for their careful readings of this manuscript, and for bringing to our attention the extensive work of the Hannover group.

\end{document}